\newcommand\DOI[1]{{\tt DOI:\href{https://doi.org/#1}{#1}}}
\def\P{\mathcal{P}}
\def\N{\mathcal{N}}
\newcommand{\old}[1]{}
\theoremstyle{plain}
\newtheorem{theorem}{Theorem}[section]
\newtheorem{lemma}[theorem]{Lemma}
\newtheorem{corollary}[theorem]{Corollary}
\theoremstyle{definition}
\newtheorem{definition}[theorem]{Definition}
\newtheorem{observation}[theorem]{Observation}
\newtheorem{example}[theorem]{Example}
\tikzset{every picture/.style=very thin}
\title{\bfseries Degree-preserving graph dynamics - a versatile process to
construct random networks\footnotemark[0]\footnotetext{PLE and TRM were supported in part by the National Research, Development and Innovation Office --- NKFIH grant SNN 135643, K 132696. SRK and ZT were supported by the NSF grant IIS-1724297.}}
\author[1]{P\'eter L. Erd\H{o}s}
\author[2]{Shubha R. Kharel}
\author[1]{Tam\'as R. Mezei}
\author[2]{Zoltan~Toroczkai}
\affil[1]{\small Dept.\ of Combinatorics and applications, Alfréd Rényi Inst.\
    of Math.\ (LERN),\protect\\ Reáltanoda utca 13--15, H-1053 Budapest, Hungary.\protect\\\texttt{<erdos.peter,
mezei.tamas.robert>@renyi.hu}}
\affil[2]{\small Department of Physics, 225 Nieuwland Science Hall, Notre Dame,
    IN 46556, USA.\protect\\\texttt{<skharel,toro>@nd.edu}}
\begin{document}

\maketitle

\begin{abstract}
    Real-world networks evolve over time via additions or removals of vertices and
    edges. In current network evolution models, vertex degree varies or grows
    arbitrarily. A recently introduced \emph{degree-preserving network growth}
    (DPG) family of models preserves vertex degree, resulting in structures
    significantly different from and more diverse than previous models
    ([\emph{Nature Physics 2021, \DOI{10.1038/s41567-021-01417-7}}]).
    Despite its degree preserving property, the
    DPG model is able to replicate the output of several well-known
    real-world network growth models. Simulations showed that many well-studied real-world
    networks can be constructed from small seed graphs.
    
    \medskip

    Here we start the development of a rigorous mathematical theory underlying the DPG family of network
    growth models.
    We prove that the degree sequence of the output of some of the well-known,
    real-world network growth models can be reconstructed via the DPG process,
    using proper parametrization. We also show that the general problem of
    deciding whether a simple graph can be obtained via the DPG process from a
    small seed (DPG feasibility) is, as expected, \textsc{NP}-complete. It is an
    important open problem to uncover whether there is a
    structural reason behind the DPG-constructibility of real-world networks.
    
    \smallskip
    
    \noindent\emph{Keywords:} network growth models;  degree-preserving growth
    (DPG); matching theory; synthetic networks; power-law degree distribution;
\end{abstract}

\section{Introduction}\label{sec:intro}

Many network models have been introduced in the literature, from the
configuration model of Bollobás~\cite{Bollobas} and Molloy and Reed~\cite{Molloy} through the Watts-Strogatz
small-world networks~\cite{WS98}, the Chung-Lu models~\cite{ACL00}, to the
\textsc{IncPower} model of Arman, Gao, and Wormald~\cite{AGW19} and, 
arguably the most popular model by Barab\'asi and Albert~\cite{BA99}, also 
called the preferential attachment (PA) model. In most of the growth models 
(including in the PA), the incoming vertex forms connections with a 
select number of vertices from the existing network, therefore also 
increasing the degrees of those vertices. Accordingly, this makes the degree of
a vertex in the network dependent on the current size (number of vertices) of the {\em whole
network}, although with increases happening with smaller and smaller probability as the network grows. While this is not an issue for networks in which the creation and maintenance of edges does not bear a cost to the vertices (such as citation networks), it becomes unrealistic for physical networks. This is because in such networks link formation and maintenance bears a cost to the vertices (usually a local cost), leading to degree saturation, due to natural budget limitations in physical systems. Moreover, in models like the PA, vertex degree is not an intrinsic property of the vertex, but it is imposed externally and globally, which, again, is often unrealistic. 
\medskip

Here we describe a family of novel network growth models which considers vertex degree an \emph{intrinsic property} of the vertex and the process of network growth ``respects'' the degrees of the vertices already incorporated in the network. There is clearly a large multitude of model classes with this desired property, for example, models that only consider a degree limit/capacity (a saturation value) to be an intrinsic property of the vertex (otherwise the degrees can vary up to that limit), or models, in which the degrees are fixed and are an intrinsic property of the vertices, staying constant throughout the growth process (once the vertices fully joined the network). Although both types of classes share the same concept, here we focus on the latter, due to their simplicity and mathematical tractability. 
This type of model class was recently introduced under the name of  \emph{degree-preserving network growth} (DPG) in~\cite{DPG21}. In the DPG model family the new (or ``incoming'') vertices join the network with a preset degree, called here \emph{proper degree}, or \emph{p-degree} in short, by connecting to the vertices of the existing network (the ``old'' vertices), in a way that their degrees stay unchanged and the graph stays a simple graph (described below).

\medskip

Network growth models in which degrees are an intrinsic property of the vertices, are useful and needed from a modeling perspective.
An example is the case of  chemical compounds: here, if a vertex represents
an atom, then necessarily, its degree, which is the atom's valency (i.e., number of
chemical bonds it can form) must stay fixed during the process; however, chemical complexes
can, in principle, be arbitrarily large. Another example is the class of networks in which
(some, or all of) the existing vertices cannot accept additional connections because their
connectivity is saturated, such as in social networks, infrastructure networks, 
or, as described above in any physical network where the formation and maintenance of connections 
bears a cost. 

\medskip

The DPG dynamics can be described in the simplest form for even degrees: let $G$ be a simple graph. In a step, a new vertex $w$ joins the graph by removing $k$ pairwise disjoint edges of $G$, i.e., a matching, followed by connecting $w$ to the end vertices of the $k$ removed edges. The degree of the newly inserted vertex is $2k$. This step does not join
two vertices that are non-adjacent, and furthermore, the degrees of vertices in $G$ are
not changed. This operation is called a \emph{degree preserving growth} step
(\textbf{DP-step} for short). The \emph{degree-preserving growth} process
iteratively repeats DP-steps, starting with an arbitrarily chosen graph; the
resulting process is what we generally refer to as \textbf{DPG} dynamics. 
In \Cref{sec:def} below we provide the general description corresponding to the inclusion of vertices with degrees of arbitrary parity.

\medskip

A specific case of this process is not completely new. If the degree of the 
inserted vertex is a {\em constant $2k$}, we get a dynamic model for (relatively) 
random $2k$-regular graphs. The case $k=2$ played an important role for client-server 
architectures in peer-to-peer networking called SWAN technology~\cite{Bou03}, 
using the so-called ``clothespinning'' procedure (a special case of the general 
DP-step) and its inverse. We will return back to this fact in \Cref{sec:reg}.

\medskip

The different possible strategies for dynamically choosing the degree $k$ of the
incoming vertex and the different modalities of joining them to the existing network 
provide a large collection of rather different growth models and very different 
kinds of networks (see~\cite{DPG21}).
Therefore it is natural to ask what kinds of networks can be constructed by the various
DPG models. To answer this question, it is necessary to fully describe
the \emph{inverse} operation of a DP-step. Pick a vertex $w$ in graph $G$ and
examine its neighborhood graph $\Gamma(w)$. Assume that there exists a
\emph{perfect matching} $M$ in the \emph{complement graph} $\bar \Gamma(w)$ of
$\Gamma(w)$. Let us delete $w$ --- together with its adjacent edges --- and add
$M$ to $G\setminus w$; call this move a \textbf{DP-removal} and the resulting
graph $G_w$. The original graph $G$ can be reproduced from $G_w$ with a DP-step
with $k:=|M|$. (The complete definition of the DP-removals will be described in
\Cref{sec:def}.)

\medskip

In~\cite{DPG21}, many real-world networks were studied, if whether they can be
constructed from small initial ``kernel'' graphs via the DPG-process. 
Note that there can be many DP paths leading to the same target graph, 
starting from different small kernel graphs or possibly even from the same small kernel graph.
The very surprising computational finding was that the overwhelming majority of those real-life networks have this property. 
This numerical observation directly leads to the impression that finding a long sequence
of DP-removals for a given network should not be hard. One of the results of
this paper (\Cref{sec:NP}) is that this \textbf{inverse DPG problem} is actually
\textsc{NP}-complete.  We believe that this apparent contradiction means that 
all those real-life networks may share a yet unknown
common structural property that makes them DPG feasible. This indicates that the DPG
dynamics captures something about these networks that is not explained by other
models.

\medskip

In this paper we will also discuss new results on different DPG models
(see~\Cref{sec:kind}) as well as some extensions of the algorithmic and
stochastic considerations of DPG processes defined in~\cite{DPG21}.

\section{Definitions}\label{sec:def}

Let $G$ be an \emph{simple} graph, i.e., 
without loops and parallel edges. We will construct a sequence 
${(G_i)}_{i=0}^\infty$ of networks via a DPG process, where $G_0=G$ and 
$V(G_i)\subset V(G_{i+1})$ for any $i\ge 0$, and $G_{i+1}$ has exactly one additional vertex compared to $G_i$. 
In \Cref{sec:intro} we described how to add a new vertex of even degree, which we
now generalize to allow inserting odd p-degree vertices as well. Such an addition, however, cannot be 
achieved in one step: deleting a matching and connecting the new vertex $w$ 
to the end-points of the edges from the matching, endows $w$ with an even degree. 
One possible, and arguably the simplest solution is that when we want to add 
a new vertex $w$ of degree $2k+1$, we connect  $2k$ edges of $w$ to the graph
(as described above), and introduce an imaginary edge, called the \emph{stub-edge}, which maintains the missing degree of $w$.  When another new odd-degree vertex $w'$ arrives at a later step, the algorithm always connects it to the vertex associated with the existing stub-edge, forming a new edge in the new graph. 
\medskip

In order to make clearer the
description of the process of adding degrees of arbitrary parity, we make use of the notion of p-degree, introduced in the previous section. Accordingly, the p-degree is nothing but the intrinsic degree of the incoming vertex. We distinguish the p-degree from the vertex's actual degree in the network, the latter corresponding to the number of edges incident on the vertex, which is one less than the p-degree if there is a stub-edge incident on our vertex; otherwise the two are identical. The vertex that has a stub-edge will be called ``degree-deficient'' vertex. 

\medskip

To help with the bookkeeping of the degree-deficient vertex, extend the current
network $G_i$ to $G^s_i$, which contains an extra vertex $s$ of degree $0$ or $1$,
called the \emph{stub-node}. If $G_i$ contains a degree-deficient vertex $x$, then $G_i^s$
contains the edge $xs$; otherwise $s$ is isolated from $G_i$. The vertex $s$ and 
the \emph{stub-edge} $xs$ do not belong to $G_i$. Note, all vertices (except $s$) in $G_i^s$ are connected 
according to their p-degrees. We introduce the 
\emph{lifting operation} to provide a unified description of different types of 
DP-steps.  The current network is $G_i$, and we want to construct a network 
$G_{i+1}$ which will contain the incoming vertex $w$.

\medskip
To begin with, we fix some \emph{principles} we want to uphold during the DPG process:
\begin{enumerate}[\bfseries {Principle} 1)]
    \item For any $w\in V(G_i^s)\setminus \{s\}$ the degree of $w$ is constant: $d_{G_i^s}(w)=d_{G_{i+1}^s}(w)$.
    \item In each DP-step, the edges of a matching will be removed; specifically, the difference
        $G_i\setminus G_{i+1}[V(G_i)]$ is a matching for any $i$.
    \item After any completed DP-step, there will be at most 1 stub-edge in the produced network.
    \item If $u,v\in V(G_i)$ and $uv\notin E(G_i)$, then $uv\notin E(G_{i+1})$.
\end{enumerate}
As we already mentioned, to represent the stub-edge, we extend the current 
network $G$ into $G^s$ which contains an extra vertex $s$ of degree $0$ or $1$. 
When the network contains a stub-edge --- connected to vertex  $x$ --- then $G^s$ contains 
the edge $xs$. The vertex $s$ and the stub-edge $xs$ do not belong to $G$.

\medskip

The current network is $G_i$, and we want to construct network 
$G_{i+1}$ which will contain the newly added vertex $w$. The three graphs 
$G_{i},G^s_{i}, G_{i+1}$ completely determine $G^s_{i+1}$; as before, 
$s$ belongs to $G^s_i$ and $G^s_{i+1}$, but not to the networks $G_i$ 
and $G_{i+1}$. Furthermore, $s$ is of degree 0 or 1 in $G^s_i$ and 
$G^s_{i+1}$ (the degree of $s$ in $G^s_i$ and in $G^s_{i+1}$ may be different).

\paragraph{Generalized lifting operation.} Assume $uv \in E(G_i^s)$.
\begin{itemize}
\item If $uv \in E(G_i)$ (therefore $u,v\neq s$), lifting $uv$ to $w$ means
	removing $uv$ and adding $uw,wv$ to the network $G_{i+1}$.
\item If $us$ is an edge in $G^s_i$ and the p-degree of $w$ is even, lifting $us$ to $w$
	means removing $us$ and adding $uw,ws$ to the network $G^s_{i+1}$. 
	(Recall that the vertex $s$ and the stub-edge do not belong to
	$G_{i+1}$.)
\item If $us$ is an edge in $G^s_i$ and the p-degree of $w$ is odd, lifting $us$ to $w$
	means removing $us$ and adding $uw$ to the network $G_{i+1}$. In 
	$G^s_{i+1}$ the degree of $s$ is zero.
\end{itemize}
Now we are ready to introduce the DP-steps. Given a set of edges $M$, the set of
vertices covered by $M$ is denoted by $\cup M$, as per the usual set theoretic
notation.
\paragraph{Admissible DP-steps:}
\begin{enumerate}[{\textit{Op.}\ 1./}]
    \item \textbf{\boldmath If the p-degree of $w$ is $2k$:} select a set $M$ of $k$
        independent edges from $G^s_i$, and lift them all to $w$. If
        the stub-edge $sx$ belongs to $M$ then $d_{i+1}(x)=d_i(x)+1$ in
        $G_{i+1}$, and $d_{i+1}(w)=2k-1$ in $G_{i+1}$; the stub-edge
        $sw$ belongs to $G_{i+1}^s$.\label{op1}

        \smallskip

        \hspace*{-1.2cm}
        \begin{tikzpicture}[scale=.6]
        \def\a{1}
        \begin{scope}[shift={(0,0)}]
            \node[ellipse,draw=white!50!black,minimum width=2.0cm,minimum
                height=0.85cm,label={[label distance=-0.25cm]30:\small $G_i$}] at (2,0) {};
            \node[ellipse,draw=white!50!black,minimum width=2.75cm,minimum
                height=1.75cm,label={[label distance=-0.15cm]225:\small $G^s_i$}] at
                (1.75,0.5) {};
            \node[fill,circle,minimum size=3pt, inner sep=0,label={west:\small $s$}] at
                (1-0.25,1.25) {};

            \foreach \i in {1,...,5}
                \node[fill,circle,minimum size=3pt, inner sep=0] at
                    (0.5+0.5*\i,-0.25) {};

            \foreach \i in {1,...,3}
                \draw (\i-0.25,0.25) node[fill,circle,minimum size=3pt, inner
                    sep=0] {}--(\i+0.25,0.25) node[fill,circle,minimum size=3pt, inner
                    sep=0] {};

            \draw[thick,|->] (4.2,0.5) -- (4.8,0.5);
            \node at (5,-2.5) {$d_{G_{i+1}}(w)=2|M|=2k$};
        \end{scope}
        \begin{scope}[shift={(5.5,0)}]
            \node[ellipse,draw=white!50!black,minimum width=2.0cm,minimum
                height=1.3cm,label={[label distance=-0.25cm]30:\small $G_{i+1}$}] at
                (2,0.25) {};
            \node[ellipse,draw=white!50!black,minimum width=3.25cm,minimum
                height=1.75cm,label={[label distance=-0.15cm]-45:\small $G^s_{i+1}$}] at
                (2.25,0.5) {};
            \node[fill,circle,minimum size=4pt, inner sep=0,label={east:\small $w$}]
                (w) at (2,1) {};
            \node[fill,circle,minimum size=3pt, inner sep=0,label={west:\small $s$}] at
                (1-0.25,1.25) {};

            \foreach \i in {1,...,5}
                \node[fill,circle,minimum size=3pt, inner sep=0] at
                    (0.5+0.5*\i,-0.25) {};

            \foreach \i in {1,...,3}
            {
                \draw[dotted] (\i-0.25,0.25) node[fill,circle,minimum size=3pt, inner
                    sep=0] {}--(\i+0.25,0.25) node[fill,circle,minimum size=3pt, inner
                    sep=0] {};
                \draw (w) -- (\i-0.25,0.25);
                \draw (w) -- (\i+0.25,0.25);
            }

        \end{scope}

        \begin{scope}[shift={(12,0)}]
            \node[ellipse,draw=white!50!black,minimum width=2.0cm,minimum
                height=0.85cm,label={[label distance=-0.25cm]30:\small $G_i$}] at (2,0) {};
            \node[ellipse,draw=white!50!black,minimum width=2.75cm,minimum
                height=1.75cm,label={[label distance=-0.15cm]225:\small $G^s_i$}] at
                (1.75,0.5) {};
            \node[fill,circle,minimum size=3pt, inner sep=0,label={west:\small
                $s$}] (s) at
                (1-0.25,1.25) {};

            \foreach \i in {1,...,5}
                \node[fill,circle,minimum size=3pt, inner sep=0] at
                    (0.5+0.5*\i,-0.25) {};

            \foreach \i in {1,...,3}
            {
                \ifx\i\a
                    \draw (\i-0.25,0.25) node[fill,circle,minimum size=3pt, inner
                        sep=0] {} -- (s);
                    \node[fill,circle,minimum size=3pt, inner sep=0] at
                          (\i+0.25,0.25) {};
                \else
                    \draw (\i-0.25,0.25) node[fill,circle,minimum size=3pt, inner
                     sep=0] {}--(\i+0.25,0.25) node[fill,circle,minimum size=3pt, inner
                     sep=0] {};
                \fi
            }
            \draw[thick,|->] (4.2,0.5) -- (4.8,0.5);
            \node at (5,-2.5) {$d_{G_{i+1}}(w)=2|M|-1=2k-1$};
        \end{scope}
        \begin{scope}[shift={(12+5.5,0)}]
            \node[ellipse,draw=white!50!black,minimum width=2.0cm,minimum
                height=1.3cm,label={[label distance=-0.25cm]30:\small $G_{i+1}$}] at
                (2,0.25) {};
            \node[ellipse,draw=white!50!black,minimum width=3.25cm,minimum
                height=1.75cm,label={[label distance=-0.15cm]-45:\small $G^s_{i+1}$}] at
                (2.25,0.5) {};
            \node[fill,circle,minimum size=4pt, inner sep=0,label={east:\small $w$}]
                (w) at (2,1) {};
            \node[fill,circle,minimum size=3pt, inner sep=0,label={west:\small
                $s$}] (s) at (1-0.25,1.25) {};

            \foreach \i in {1,...,5}
                \node[fill,circle,minimum size=3pt, inner sep=0] at
                    (0.5+0.5*\i,-0.25) {};

            \foreach \i in {1,...,3}
            {
                \ifx\i\a
                \draw[dotted] (\i-0.25,0.25) node[fill,circle,minimum size=3pt, inner
                        sep=0] {} -- (s);
                    \node[fill,circle,minimum size=3pt, inner sep=0] at
                          (\i+0.25,0.25) {};
                \draw (w) -- (\i-0.25,0.25);
                \draw (w) -- (s);
                \else
                \draw[dotted] (\i-0.25,0.25) node[fill,circle,minimum size=3pt, inner
                    sep=0] {}--(\i+0.25,0.25) node[fill,circle,minimum size=3pt, inner
                    sep=0] {};
                \draw (w) -- (\i-0.25,0.25);
                \draw (w) -- (\i+0.25,0.25);
                \fi
            }
        \end{scope}
    \end{tikzpicture}

    \item \textbf{\boldmath If the p-degree of $w$ is $2k+1$ and $d(s)= 1$ in $G^s_i$:}
        select a set $M$ of $k+1$ independent edges from $G^s_i$, such
        that the stub-edge is included in $M$. Lift every edge of $M$ to
        $w$. In the $G^s_{i+1}$ we have $d(s)=0$.\label{op2}

        \smallskip

        \hspace*{2.4cm}
        \begin{tikzpicture}[scale=.6]
        \def\a{1}
        \begin{scope}[shift={(0,-5)}]
            \node[ellipse,draw=white!50!black,minimum width=2.0cm,minimum
                height=0.85cm,label={[label distance=-0.25cm]30:\small $G_i$}] at (2,0) {};
            \node[ellipse,draw=white!50!black,minimum width=2.75cm,minimum
                height=1.75cm,label={[label distance=-0.15cm]225:\small $G^s_i$}] at
                (1.75,0.5) {};
            \node[fill,circle,minimum size=3pt, inner sep=0,label={west:\small
                $s$}] (s) at
                (1-0.25,1.25) {};

            \foreach \i in {1,...,5}
                \node[fill,circle,minimum size=3pt, inner sep=0] at
                    (0.5+0.5*\i,-0.25) {};

            \foreach \i in {1,...,3}
            {
                \ifx\i\a
                    \draw (\i-0.25,0.25) node[fill,circle,minimum size=3pt, inner
                        sep=0] {} -- (s);
                    \node[fill,circle,minimum size=3pt, inner sep=0] at
                          (\i+0.25,0.25) {};
                \else
                    \draw (\i-0.25,0.25) node[fill,circle,minimum size=3pt, inner
                     sep=0] {}--(\i+0.25,0.25) node[fill,circle,minimum size=3pt, inner
                     sep=0] {};
                \fi
            }
            \draw[thick,|->] (4.2,0.5) -- (4.8,0.5);
            \node at (5,-2.5) {$d_{G_{i+1}}(w)=2|M|-1=2k+1$};
        \end{scope}
        \begin{scope}[shift={(5.5,-5)}]
            \node[ellipse,draw=white!50!black,minimum width=2.0cm,minimum
                height=1.3cm,label={[label distance=-0.25cm]30:\small $G_{i+1}$}] at
                (2,0.25) {};
            \node[ellipse,draw=white!50!black,minimum width=3.25cm,minimum
                height=1.75cm,label={[label distance=-0.15cm]-45:\small $G^s_{i+1}$}] at
                (2.25,0.5) {};
            \node[fill,circle,minimum size=4pt, inner sep=0,label={east:\small $w$}]
                (w) at (2,1) {};
            \node[fill,circle,minimum size=3pt, inner sep=0,label={west:\small
                $s$}] (s) at (1-0.25,1.25) {};

            \foreach \i in {1,...,5}
                \node[fill,circle,minimum size=3pt, inner sep=0] at
                    (0.5+0.5*\i,-0.25) {};

            \foreach \i in {1,...,3}
            {
                \ifx\i\a
                \draw[dotted] (\i-0.25,0.25) node[fill,circle,minimum size=3pt, inner
                        sep=0] {} -- (s);
                    \node[fill,circle,minimum size=3pt, inner sep=0] at
                          (\i+0.25,0.25) {};
                \draw (w) -- (\i-0.25,0.25);
                \else
                \draw[dotted] (\i-0.25,0.25) node[fill,circle,minimum size=3pt, inner
                    sep=0] {}--(\i+0.25,0.25) node[fill,circle,minimum size=3pt, inner
                    sep=0] {};
                \draw (w) -- (\i-0.25,0.25);
                \draw (w) -- (\i+0.25,0.25);
                \fi
            }
        \end{scope}
    \end{tikzpicture}

    \item \textbf{\boldmath If the p-degree of $w$ is $2k+1$ and $d(s)=0$ in $G^s_i$:}
        choose an integer $r\in [0,2k+2]$.\label{op3}
        \begin{enumerate}
            \item \textbf{\boldmath if $r=0$:} select a set $M$ of $k$
                independent edges from $G_i$, and lift $M$ to $w$. Add
                the stub-edge $ws$ to the network
                $G^s_{i+1}$;\label{op3a}
            \item \textbf{\boldmath if $r\in [1,2k+2]$:} select a set $M$ of
                $k+1$ independent edges from $G_i$, and lift $M$ to $w$.
                Let $u$ be the $r^\text{th}$ vertex in $\cup M$;
                remove $uw$ and add the stub-edge $us$ to the network $G^s_{i+1}$.\label{op3b}
        \end{enumerate}

        \smallskip

        \hspace*{-1.2cm}
        \begin{tikzpicture}[scale=.6]
        \def\a{1}
        \begin{scope}[shift={(0,-5)}]
            \node[ellipse,draw=white!50!black,minimum width=2.0cm,minimum
                height=0.85cm,label={[label distance=-0.25cm]30:\small $G_i$}] at (2,0) {};
            \node[ellipse,draw=white!50!black,minimum width=2.75cm,minimum
                height=1.75cm,label={[label distance=-0.15cm]225:\small $G^s_i$}] at
                (1.75,0.5) {};
            \node[fill,circle,minimum size=3pt, inner sep=0,label={west:\small
                $s$}] (s) at
                (1-0.25,1.25) {};

            \foreach \i in {1,...,5}
                \node[fill,circle,minimum size=3pt, inner sep=0] at
                    (0.5+0.5*\i,-0.25) {};

            \draw (1-0.25,0.25) node[fill,circle,minimum size=3pt, inner sep=0] {};
            \draw (1+0.25,0.25) node[fill,circle,minimum size=3pt, inner sep=0] {};
            \foreach \i in {2,...,3}
                \draw (\i-0.25,0.25) node[fill,circle,minimum size=3pt, inner
                     sep=0] {}--(\i+0.25,0.25) node[fill,circle,minimum size=3pt, inner sep=0] {};

            \draw[thick,|->] (4.2,0.5) -- (4.8,0.5);
            \node at (5,-2.5) {Op.\ 3a: $d_{G_{i+1}}(w)=2|M|=2k$};
        \end{scope}
        \begin{scope}[shift={(5.5,-5)}]
            \node[ellipse,draw=white!50!black,minimum width=2.0cm,minimum
                height=1.3cm,label={[label distance=-0.25cm]30:\small $G_{i+1}$}] at
                (2,0.25) {};
            \node[ellipse,draw=white!50!black,minimum width=3.25cm,minimum
                height=1.75cm,label={[label distance=-0.15cm]-45:\small $G^s_{i+1}$}] at
                (2.25,0.5) {};
            \node[fill,circle,minimum size=4pt, inner sep=0,label={east:\small $w$}]
                (w) at (2,1) {};
            \node[fill,circle,minimum size=3pt, inner sep=0,label={west:\small
                $s$}] (s) at (1-0.25,1.25) {};
            \draw (w) -- (s);

            \foreach \i in {1,...,5}
                \node[fill,circle,minimum size=3pt, inner sep=0] at
                    (0.5+0.5*\i,-0.25) {};

            \draw (1-0.25,0.25) node[fill,circle,minimum size=3pt, inner sep=0] {};
            \draw (1+0.25,0.25) node[fill,circle,minimum size=3pt, inner sep=0] {};
            \foreach \i in {2,...,3}
            {
                \draw[dotted] (\i-0.25,0.25) node[fill,circle,minimum size=3pt, inner
                    sep=0] {}--(\i+0.25,0.25) node[fill,circle,minimum size=3pt, inner
                    sep=0] {};
                \draw (w) -- (\i-0.25,0.25);
                \draw (w) -- (\i+0.25,0.25);
            }
        \end{scope}

        \begin{scope}[shift={(12,-5)}]
            \node[ellipse,draw=white!50!black,minimum width=2.0cm,minimum
                height=0.85cm,label={[label distance=-0.25cm]30:\small $G_i$}] at (2,0) {};
            \node[ellipse,draw=white!50!black,minimum width=2.75cm,minimum
                height=1.75cm,label={[label distance=-0.15cm]225:\small $G^s_i$}] at
                (1.75,0.5) {};
            \node[fill,circle,minimum size=3pt, inner sep=0,label={west:\small
                $s$}] (s) at
                (1-0.25,1.25) {};

            \foreach \i in {1,...,5}
                \node[fill,circle,minimum size=3pt, inner sep=0] at
                    (0.5+0.5*\i,-0.25) {};

            \foreach \i in {1,...,3}
                \draw (\i-0.25,0.25) node[fill,circle,minimum size=3pt, inner
                     sep=0] {}--(\i+0.25,0.25) node[fill,circle,minimum size=3pt, inner
                     sep=0] {};

            \draw[thick,|->] (4.2,0.5) -- (4.8,0.5);
            \node at (5,-2.5) {Op.\ 3b: $d_{G_{i+1}}(w)=2|M|-1=2k+1$};
        \end{scope}
        \begin{scope}[shift={(12+5.5,-5)}]
            \node[ellipse,draw=white!50!black,minimum width=2.0cm,minimum
                height=1.3cm,label={[label distance=-0.25cm]30:\small $G_{i+1}$}] at
                (2,0.25) {};
            \node[ellipse,draw=white!50!black,minimum width=3.25cm,minimum
                height=1.75cm,label={[label distance=-0.15cm]-45:\small $G^s_{i+1}$}] at
                (2.25,0.5) {};
            \node[fill,circle,minimum size=4pt, inner sep=0,label={east:\small $w$}]
                (w) at (2,1) {};
            \node[fill,circle,minimum size=3pt, inner sep=0,label={west:\small
                $s$}] (s) at (1-0.25,1.25) {};

            \foreach \i in {1,...,5}
                \node[fill,circle,minimum size=3pt, inner sep=0] at
                    (0.5+0.5*\i,-0.25) {};

            \foreach \i in {1,...,3}
            {
                \ifx\i\a
                \node[fill,circle,minimum size=3pt, inner sep=0] at
                     (\i-0.25,0.25) {};
                \node[fill,circle,minimum size=3pt, inner sep=0] at
                    (\i+0.25,0.25) {};
                \draw (s) -- (\i-0.25,0.25);
                \else
                \draw[dotted] (\i-0.25,0.25) node[fill,circle,minimum size=3pt, inner
                    sep=0] {}--(\i+0.25,0.25) node[fill,circle,minimum size=3pt, inner
                    sep=0] {};
                \draw (w) -- (\i-0.25,0.25);
                \fi
                \draw (w) -- (\i+0.25,0.25);
            }
        \end{scope}
    \end{tikzpicture}
\end{enumerate}
Next we introduce the inverse operations of the above defined admissible DP-steps.

\paragraph{Admissible DP-removal steps:}
We want to remove $w$ from $G_{i+1}$ by the inverse of a DP-step.
\begin{enumerate}[{\textit{InvOp.} 1./}]
\item \textbf{\boldmath If $d(w)=2k$ in the network $G^s_{i+1}$:} choose a set
	$M$ of $k$ independent non-edges in the $G^s_{i+1}$-neighborhood of $w$.
	Change the non-edges in $M$ to edges and remove $w$ along with its incident edges to obtain $G^s_i$. This
	defines the network $G_i$, as well. If $s$ is a neighbor of $w$ in $G_{i+1}^s$, then the non-edge covering $s$ in $M$ becomes the stub-edge after the inverse step.
	This is the inverse of Op.~\ref{op1}.\label{invop1}

\item \textbf{\boldmath If $d(w)=2k+1$ and $d(s)=0$ in $G^s_{i+1}$:} select
	a set $M$ of $k$ independent non-edges in the $G_{i+1}$-neighborhood of
	$w$. Denote by $x$ the vertex connected to $w$ in $G_{i+1}$ which is not
	covered by $M$. Change the non-edges in $M$ to edges and remove $w$ along with its edges to obtain
	$G^s_i$. Remove $w$ along with its edges. Add the stub-edge $xs$ to $G^s_i$. 
	This is the inverse of Op.~\ref{op2}.\label{invop2}

\item[{\textit{InvOp.} 3a./}] \textbf{\boldmath If $d(w)=2k+1$ and the stub-edge $ws$ 
    belongs to  $G^s_{i+1}$:} select a set $M$ of $k$  independent non-edges in
	the $G_{i+1}$-neighborhood of $w$. Change the non-edges in $M$ to edges
	to obtain $G^s_i$. Remove $w$ along with its edges. Then $d(s)=0$ 
	in $G^s_i$. This is the inverse of Op.~\ref{op3a}.\label{invop3a}

\item[{\textit{InvOp.} 3b./}] \textbf{\boldmath If $d(w)=2k+1$ and for the stub-edge $us$ in the
	$G^s_{i+1}$ we have $u \ne w$ and $uw$ is not an edge in $G_{i+1}$:}
	select a set $M$ of $k+1$ independent non-edges from
	$G^s_{i+1}[\Gamma_{G^s_{i+1}}(w)\cup\{u\}]$ (recall that $\Gamma_G(v)$ denotes the 
	neighborhood graph of vertex $v$ in graph $G$). Change the non-edges in $M$
	to edges to obtain $G^s_i$. Remove $w$ along with its edges. In
	the network $G^s_i$ we have degree$(s)=0$. This is the inverse of
	Op.~\ref{op3b}.\label{invop3b}
\end{enumerate}

\begin{definition}[\textbf{Irreducibility}]
    A graph $G$ is called \emph{irreducible} if none of the above inverse
    operations can be applied to any vertex $w$ of $G$.
\end{definition}

\paragraph{Two simple examples.}\label{sec:prop}
One can ask whether a small irreducible ``kernel'' network to which a given
network can be reduced to with DP-removals is unique. The answer, not
surprisingly, is negative.

\begin{example}
\Cref{fig:2K4} depicts a series of DP-steps and DP-removals that lead from
$2K_4$ to a $K_4$.
\end{example}

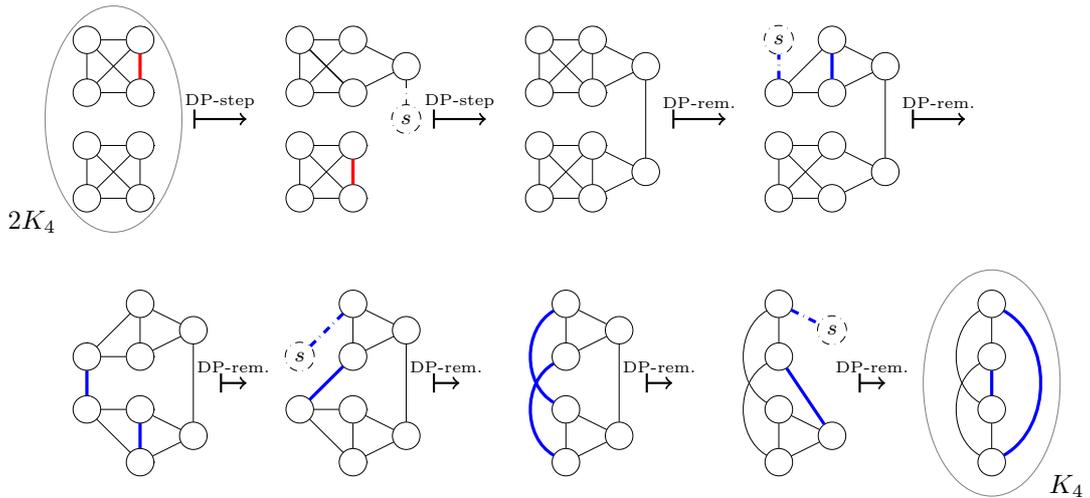
\begin{figure}[H]
    \centering
    \begin{tikzpicture}[scale=.7]
    \begin{scope}[shift={(-3,0)}]
        \node[draw,circle] (u1) at (0,0) {};
        \node[draw,circle] (u2) at (1,0) {};
        \node[draw,circle] (u3) at (1,1) {};
        \node[draw,circle] (u4) at (0,1) {};
        \node[draw,circle] (v1) at (0,2) {};
        \node[draw,circle] (v2) at (1,2) {};
        \node[draw,circle] (v3) at (1,3) {};
        \node[draw,circle] (v4) at (0,3) {};

        \foreach \i in {1,2,3}
        \foreach \j in {\i,...,4}
        {
            \draw (u\i)--(u\j);
            \draw (v\i)--(v\j);
        }
        \draw[red, very thick] (v2)--(v3);
        \draw[thick,|->] (2.0,1.5) -- node[midway,above] {\tiny DP-step} (3.0,1.5);
        \node[ellipse,draw=white!50!black,minimum width=1.8cm,minimum
                height=3cm,label={[label distance=0.0cm]240:\small $2K_4$}] at
                (0.5,1.5) {};
    \end{scope}
    \begin{scope}[shift={(1,0)}]
        \node[draw,circle] (u1) at (0,0) {};
        \node[draw,circle] (u2) at (1,0) {};
        \node[draw,circle] (u3) at (1,1) {};
        \node[draw,circle] (u4) at (0,1) {};
        \node[draw,circle] (v1) at (0,2) {};
        \node[draw,circle] (v2) at (1,2) {};
        \node[draw,circle] (v3) at (1,3) {};
        \node[draw,circle] (v4) at (0,3) {};

        \foreach \i in {1,2,3}
        \foreach \j in {\i,...,4}
        {
            \draw (u\i)--(u\j);
            \ifthenelse{\i=2}{\draw (v2) -- (v4);}{\draw (v\i)--(v\j);}
        }

        \draw[red,very thick] (u2)--(u3);
        \node[draw,circle] (x)  at (2.0,2.5) {};
        \draw (x) -- (v2);
        \draw (x) -- (v3);
        \node[draw,circle,dashdotted, inner sep=2] (s) at ($ (x)-(0,1) $)
            {\scriptsize $s$};
        \draw[dashdotted] (x) -- (s);
        \draw[thick,|->] (2.5,1.5) -- node[midway,above] {\tiny DP-step} (3.5,1.5);
    \end{scope}
    \begin{scope}[shift={(5.5,0)}]
        \node[draw,circle] (u1) at (0,0) {};
        \node[draw,circle] (u2) at (1,0) {};
        \node[draw,circle] (u3) at (1,1) {};
        \node[draw,circle] (u4) at (0,1) {};
        \node[draw,circle] (v1) at (0,2) {};
        \node[draw,circle] (v2) at (1,2) {};
        \node[draw,circle] (v3) at (1,3) {};
        \node[draw,circle] (v4) at (0,3) {};

        \foreach \i in {1,2,3}
        \foreach \j in {\i,...,4}
        {
            \draw (u\i)--(u\j);
            \draw (v\i)--(v\j);
        }

        \node[draw,circle] (x)  at (2.0,2.5) {};
        \draw (x) -- (v2);
        \draw (x) -- (v3);

        \node[draw,circle] (y)  at (2.0,0.5) {};
        \draw (y) -- (u2);
        \draw (y) -- (u3);
        \draw (x) -- (y);
        \draw[thick,|->] (2.5,1.5) -- node[midway,above] {\tiny DP-rem.} (3.5,1.5);
    \end{scope}
    \begin{scope}[shift={(10,0)}]
        \node[draw,circle] (u1) at (0,0) {};
        \node[draw,circle] (u2) at (1,0) {};
        \node[draw,circle] (u3) at (1,1) {};
        \node[draw,circle] (u4) at (0,1) {};
        \node[draw,circle] (v1) at (0,2) {};
        \node[draw,circle] (v2) at (1,2) {};
        \node[draw,circle] (v3) at (1,3) {};

        \draw (u1) -- (u4);
        \draw (u2) -- (u4);
        \draw (u3) -- (u4);
        \draw (u3) -- (u1);
        \draw (u1) -- (u2);

        \draw (v1) -- (v2);
        \draw (v1) -- (v3);
        \node[draw,circle,dashdotted, inner sep=2] (s) at ($ (v1)+(0,1) $)
            {\scriptsize $s$};
        \draw[blue,dashdotted, very thick] (v1) -- (s);

        \draw[blue, very thick] (v2)--(v3);

        \node[draw,circle] (x)  at (2.0,2.5) {};
        \draw (x) -- (v2);
        \draw (x) -- (v3);

        \node[draw,circle] (y)  at (2.0,0.5) {};
        \draw (y) -- (u2);
        \draw (y) -- (u3);
        \draw (x) -- (y);
        \draw[thick,|->] (2.5,1.5) -- node[midway,above] {\tiny DP-rem.} (3.5,1.5);
    \end{scope}
    \begin{scope}[shift={(-3,-5)}]
        \node[draw,circle] (u2) at (1,0) {};
        \node[draw,circle] (u3) at (1,1) {};
        \node[draw,circle] (u4) at (0,1) {};
        \node[draw,circle] (v1) at (0,2) {};
        \node[draw,circle] (v2) at (1,2) {};
        \node[draw,circle] (v3) at (1,3) {};

        \foreach \i in {1,2}
        \foreach \j in {\i,...,3}
        {
            \draw (v\i)--(v\j);
        }
        \foreach \i in {2,3}
        \foreach \j in {\i,...,4}
        {
            \draw (u\i)--(u\j);
        }
        \draw[blue, very thick] (v1) -- (u4);

        \draw[blue, very thick] (u2)--(u3);

        \node[draw,circle] (x)  at (2.0,2.5) {};
        \draw (x) -- (v2);
        \draw (x) -- (v3);

        \node[draw,circle] (y)  at (2.0,0.5) {};
        \draw (y) -- (u2);
        \draw (y) -- (u3);
        \draw (x) -- (y);
        \draw[thick,|->] (2.5,1.5) -- node[midway,above] {\tiny DP-rem.} (3.0,1.5);
    \end{scope}
    \begin{scope}[shift={(1,-5)}]
        \node[draw,circle] (u2) at (1,0) {};
        \node[draw,circle] (u3) at (1,1) {};
        \node[draw,circle] (u4) at (0,1) {};
        \node[draw,circle] (v2) at (1,2) {};
        \node[draw,circle] (v3) at (1,3) {};

        \draw (v2) -- (v3);
        \foreach \i in {2,3}
        \foreach \j in {\i,...,4}
        {
            \draw (u\i)--(u\j);
        }

        \node[draw,circle] (x)  at (2.0,2.5) {};
        \draw (x) -- (v2);
        \draw (x) -- (v3);

        \node[draw,circle] (y)  at (2.0,0.5) {};
        \draw (y) -- (u2);
        \draw (y) -- (u3);
        \draw (x) -- (y);

        \draw[blue, very thick] (u4)--(v2);
        \node[draw,circle,dashdotted, inner sep=2] (s) at ($ (v3)-(1,1) $)
            {\scriptsize $s$};
        \draw[dashdotted,blue, very thick] (v3) -- (s);
        \draw[thick,|->] (2.5,1.5) -- node[midway,above] {\tiny DP-rem.} (3.0,1.5);
    \end{scope}
    \begin{scope}[shift={(5,-5)}]
        \node[draw,circle] (u2) at (1,0) {};
        \node[draw,circle] (u3) at (1,1) {};
        \node[draw,circle] (v2) at (1,2) {};
        \node[draw,circle] (v3) at (1,3) {};

        \draw (v2) -- (v3);
        \draw (u2) -- (u3);

        \node[draw,circle] (x)  at (2.0,2.5) {};
        \draw (x) -- (v2);
        \draw (x) -- (v3);

        \node[draw,circle] (y)  at (2.0,0.5) {};
        \draw (y) -- (u2);
        \draw (y) -- (u3);
        \draw (x) -- (y);

        \draw[blue, very thick] (u2) edge[bend left=60] (v2);
        \draw[blue, very thick] (u3) edge[bend left=60] (v3);
        \draw[thick,|->] (2.5,1.5) -- node[midway,above] {\tiny DP-rem.} (3.0,1.5);
    \end{scope}
    \begin{scope}[shift={(9,-5)}]

        \node[draw,circle] (u2) at (1,0) {};
        \node[draw,circle] (u3) at (1,1) {};
        \node[draw,circle] (v2) at (1,2) {};
        \node[draw,circle] (v3) at (1,3) {};

        \draw (v2) -- (v3);
        \draw (u2) -- (u3);

        \node[draw,circle] (y)  at (2.0,0.5) {};
        \draw (y) -- (u2);
        \draw (y) -- (u3);

        \draw (u2) edge[bend left=60] (v2);
        \draw (u3) edge[bend left=60] (v3);

        \draw[blue, very thick] (v2) -- (y);
        \node[draw,circle,dashdotted, inner sep=2] (s) at (2.0,2.5)
            {\scriptsize $s$};
        \draw[dashdotted,blue, very thick] (v3) -- (s);
        \draw[thick,|->] (2.5,1.5) -- node[midway,above] {\tiny DP-rem.} (3.0,1.5);
    \end{scope}
    \begin{scope}[shift={(13,-5)}]
        \node[draw,circle] (u2) at (1,0) {};
        \node[draw,circle] (u3) at (1,1) {};
        \node[draw,circle] (v2) at (1,2) {};
        \node[draw,circle] (v3) at (1,3) {};

        \draw (v2) -- (v3);
        \draw (u2) -- (u3);

        \draw (u2) edge[bend left=60] (v2);
        \draw (u3) edge[bend left=60] (v3);

        \draw[blue, very thick] (v2) -- (u3);
        \draw[blue, very thick] (v3) edge[bend left=60] (u2);
        \node[ellipse,draw=white!50!black,minimum width=1.8cm,minimum
                height=3cm,label={[label distance=0.0cm]300:\small $K_4$}] at
                (1.0,1.5) {};
    \end{scope}
    \end{tikzpicture}
    \caption{2 DP-steps operations followed by 6 DP-removals
        transforms $2K_4$ into $K_4$. \textcolor{red}{Red edges}: to be
        removed by DP steps; \textcolor{blue}{blue edges}: new edges
        created by inverse DP steps. Dash-dotted vertices and edges represent
        the stub vertex and the stub edge.}\label{fig:2K4}
\end{figure}

A natural question to ask is: How difficult is it to find irreducible networks? 
One simple example is the complete graph, in which, clearly, no inverse DP-step can be
performed. Intuitively, very dense networks are irreducible. Does the converse
that networks that are sparse enough are not irreducible hold? Again, the
answer is negative, as the following example shows.

\begin{example}\label{ex:K4}
    For any $n\in\mathbb{N}^+$, there exists an irreducible 4-regular graph on
    $4n$ vertices which is connected and vertex-transitive,
    see~\Cref{fig:irreduc}.
\end{example}
\begin{figure}[ht]
    \centering
    \begin{tikzpicture}[scale=.5]
    \def\n{8}
    \foreach \i in {1,...,\n}
    {
        \foreach \j in {1,...,4}
        {
            \node[fill,circle,inner sep=2pt] (v\i\j) at ($ (\i*360/\n:4) +
                (\i*360/\n+45+90*\j:0.75) $) {};
        }
        \draw (v\i1) -- (v\i2) -- (v\i3) -- (v\i4) -- (v\i2);
        \draw (v\i3) -- (v\i1) -- (v\i4);
    }
    \foreach \i in {1,...,\n}
    {
        \pgfmathtruncatemacro{\j}{mod(\i,\n)+1}
        \draw (v\i1) -- (v\j2);
        \draw (v\i4) -- (v\j3);
    }
    \end{tikzpicture}
    \caption{A graph which is irreducible.}\label{fig:irreduc}
\end{figure}
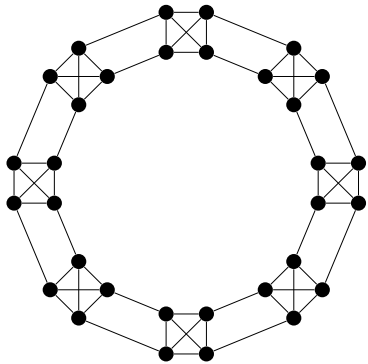

There is a polynomial time algorithm to decide whether a graph admits a DP-removal or not: one can run Edmonds' blossom algorithm for the complement of the neighborhood graph $\Gamma(v)$ of each vertex $v\in V(G)$  (\cite{edmonds}). Nevertheless, we cannot expect to characterize removability of even a set of size $n^\varepsilon$, because this problem is \textsc{NP}-hard, as shown in \Cref{sec:NP}.

\section{Particular DPG models}\label{sec:kind}
Next, we list a number of examples of different kinds of DPG dynamics. Some examples
of this section are partially based on the paper~\cite{DPG21}, where particular
cases of the following results and derivations have already been given in
that paper's Supplementary Information.

\medskip

The two freedoms in designing a DPG model are the size of the next (proper) incoming degree and how the matching of the appropriate size is selected. The size of the matching and the inserted degree constrain one another during the process. There are several possibilities for finding a matching of a given size. One can, for example, choose it greedily. Or, one can seek a maximum size matching and take a random subset of it of the needed size. Alternatively, one can try to choose uniformly and randomly one matching from the set of all the matchings with the predefined size. The relative advantages and disadvantages should also be studied from an algorithmic point of view and it is beyond the scope of this paper.

\subsection{Linear DPG}\label{sec:LinearDPG}

Denote by $\nu(G)$ the matching number of the graph $G$. Let $0<c \le 1$ be a
constant. In the \textbf{linear DPG} model, the incoming proper degrees are defined via 
$\lceil 2c\nu(G_i) \rceil$. 
The simulations show that for any value of $c$
typically there exists a very large matching (close to perfect) in $G_i$.
Therefore the degree sequence of the incoming vertex is linear in $i$ and thus also
the cumulative distribution of the degrees of $G_i$. Next, we
will study the matching number analytically for linear DPG processes.

\medskip

To analyze the DPG process, we need tools to estimate the matching number. Since the set of neighbors of an incoming vertex is only restricted by the set of available matchings, which is difficult to track, it is natural to try to estimate the matching number based only on the degree sequence. There are some available tools for this goal.
Let $\chi'(G)$ and $\Delta(G)$ respectively denote the edge-chromatic number and the maximum degree~of~$G$.

\begin{theorem}[Vizing~\cite{vizing}]\label{thm:vizing}
	$\chi'(G)\le \Delta(G)+1$ holds for any simple graph $G$.
\end{theorem}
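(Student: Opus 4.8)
The plan is to prove the theorem by induction on $|E(G)|$, via the classical fan-and-Kempe-chain recolouring argument. Set $\Delta=\Delta(G)$ and $k=\Delta+1$. The empty graph is trivial, so pick an edge $e=xy_1$ and, by the induction hypothesis, let $c$ be a proper edge-colouring of $G-e$ with colour set $\{1,\dots,k\}$. Call a colour \emph{free at} a vertex $z$ if no edge of $G-e$ incident to $z$ uses it; since $d_{G-e}(z)\le\Delta<k$, every vertex has at least one free colour. The goal is to modify $c$ so that $e$ too can be coloured.

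The main device is a \emph{fan at $x$}: a maximal sequence of distinct edges $xy_1,xy_2,\dots,xy_\ell$ at $x$ with $c(xy_{j+1})$ free at $y_j$ for $1\le j<\ell$. Attached to it is the \emph{fan shift}: for any $1\le i\le\ell$, recolouring $xy_j:=c(xy_{j+1})$ for each $j<i$ and uncolouring $xy_i$ produces a proper colouring $c_i$ of $G-xy_i$ in which no colour has been newly introduced at $x$ nor at $y_i$ (at $x$ the multiset of colours only shrinks, and $xy_i$ is the only edge at $y_i$ that is touched). If some colour is free both at $x$ and at the last fan vertex $y_\ell$, then applying $c_\ell$ and colouring $xy_\ell$ with that colour finishes the proof. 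So assume no colour is free at both ends; fix a colour $\alpha$ free at $x$ (hence used at $y_\ell$) and a colour $\beta$ free at $y_\ell$ (hence used at $x$), with $\alpha\neq\beta$.

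Here maximality of the fan enters: since $\beta$ colours some edge $xz$ at $x$ and is free at $y_\ell$, if $z$ were not a fan vertex we could append it, contradicting maximality; hence $z=y_i$ for some $i$, and moreover $i\ge 2$ (as $xy_1$ is uncoloured) and $i<\ell$ (else $\beta=c(xy_\ell)$ would be used at $y_\ell$), so the fan property gives that $\beta$ is also free at $y_{i-1}$. Let $P$ be the maximal path through $x$ in the subgraph spanned by the colours $\alpha$ and $\beta$ (a disjoint union of paths and even cycles); $x$ is an endpoint of $P$ since $\alpha$ is free at $x$. If $P$ does not end at $y_{i-1}$, then swapping $\alpha$ and $\beta$ along $P$ frees $\beta$ at $x$ while leaving $y_{i-1}$ untouched, and $c_{i-1}$ followed by colouring $xy_{i-1}$ with $\beta$ finishes the proof. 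If $P$ does end at $y_{i-1}$, then $P$ misses $y_\ell$ (its endpoints are $x$ and $y_{i-1}$, both distinct from $y_\ell$), so the maximal path $P'$ through $y_\ell$ in the same two-colour subgraph avoids $x$; swapping $\alpha$ and $\beta$ along $P'$ frees $\alpha$ at $y_\ell$ while leaving $x$ untouched, and $c_\ell$ followed by colouring $xy_\ell$ with $\alpha$ finishes the proof.

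The routine verifications are that the fan shift, the Kempe swap and the free-colour bookkeeping are mutually compatible: the edges recoloured by $c_i$ all carry colours different from $\alpha$ and $\beta$, so they lie on none of the relevant $\alpha$/$\beta$-paths, and the swap does not touch any edge at $x$ except $xy_i$. The two points I expect to be genuinely delicate are (i) deciding, according to where the Kempe chain $P$ terminates, whether to reinsert a colour at index $i-1$ or at index $\ell$, and (ii) the appeal to fan maximality forcing the $\beta$-coloured edge at $x$ to be a fan edge with $2\le i<\ell$; making these two ingredients align is the heart of the argument.
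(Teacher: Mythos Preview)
The paper does not prove Vizing's theorem; it merely cites it as a classical result (reference~\cite{vizing}) and uses the immediate consequence $\nu(G)\ge |E(G)|/(\Delta(G)+1)$. So there is nothing in the paper to compare your argument against.

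Your proof is the standard Vizing fan argument and is essentially correct. One small imprecision in your final paragraph: the claim that ``the edges recoloured by $c_i$ all carry colours different from $\alpha$ and $\beta$'' is not literally true for $c_\ell$, since $xy_i$ (with $2\le i\le\ell-1$) is among the shifted edges and originally carries colour $\beta$. The argument still goes through, because in that branch you swap along $P'$, which is disjoint from $P$ and hence avoids $x$ entirely; thus no edge $xy_j$ is touched by the swap, and the shift $c_\ell$ defined from the original $c$ remains valid after the swap. It would be cleaner to phrase the compatibility as ``the relevant Kempe swap does not meet $x$ (in the $P'$ case) or meets $x$ only at $xy_i$ (in the $P$ case), so the fan colours at $x$ are undisturbed except possibly at index $i$, which lies outside the range recoloured by $c_{i-1}$.'' With that adjustment the verification is airtight.
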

From Vizing's theorem, one can easily conclude that
\begin{equation*}\label{eq:vizing}
\nu(G) \ge \frac{|E(G)|}{\Delta(G)+1}.
\end{equation*}
This can be a tight bound in case the degree distribution is concentrated, but in the case of a wide range of degrees, this may be very far from being sharp.

\begin{theorem}[Pósa, 1962~\cite{posa}]\label{thm:posa}
    Let $G$ be a graph on $n$ vertices. Suppose its degree sequence
    ${(d(k))}_{k=1}^n$ is in increasing order. If for every $1\le k<n/2$ we have
    $(k+1)\le d(k)$, then $G$ is Hamiltonian.
\end{theorem}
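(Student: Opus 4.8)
The plan is to prove this by the classical edge-maximality argument — the same route that yields Dirac's and Ore's theorems and from which Chvátal's degree-sequence criterion follows. Pósa's condition is a strengthening of the hypothesis $\delta(G)\ge n/2$, so one expects a proof in the same spirit. Assume $n\ge 3$ (for $n\le 2$ there is nothing to prove), and suppose for contradiction that $G$ satisfies the hypothesis but has no Hamilton cycle. First I would observe that the hypothesis is preserved when edges are added: if $G\subseteq G'$ on the same vertex set, then the $k$-th smallest degree of $G'$ is at least that of $G$ (a short majorization argument), so $d'(k)\ge d(k)\ge k+1$ still holds. Hence we may pass to an \emph{edge-maximal} non-Hamiltonian graph $G$ that still satisfies the hypothesis. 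If $G$ were complete it would be Hamiltonian, so there is at least one non-adjacent pair; among all such pairs pick $\{u,v\}$ maximizing $d(u)+d(v)$, relabelled so that $h:=d(u)\le d(v)$. Note $h\ge 2$, because the hypothesis at $k=1$ forces every vertex to have degree at least $2$.

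The core step is the standard ``crossing chords'' estimate. By edge-maximality, $G+uv$ has a Hamilton cycle, which must use the new edge $uv$; deleting $uv$ from it leaves a Hamilton path $u=w_1,w_2,\dots,w_n=v$ of $G$. If some index $i$ satisfied both $w_1w_{i+1}\in E(G)$ and $w_iw_n\in E(G)$, then $w_1w_2\cdots w_i w_n w_{n-1}\cdots w_{i+1}w_1$ would be a Hamilton cycle of $G$, a contradiction. Therefore the set $\{\,i:w_1w_{i+1}\in E(G)\,\}$, of size $d(u)$, and the set $\{\,i:w_iw_n\in E(G)\,\}$, of size $d(v)$, are disjoint subsets of $\{1,\dots,n-1\}$, so $d(u)+d(v)\le n-1$; in particular $h\le (n-1)/2<n/2$ and $d(v)\le n-1-h$.

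Next I would count low-degree vertices. Every vertex $w\ne v$ that is non-adjacent to $v$ forms a non-adjacent pair with $v$, so by the maximal choice of $\{u,v\}$ we get $d(w)\le d(u)=h$; there are exactly $n-1-d(v)\ge n-1-(n-1-h)=h$ such vertices, and they are distinct. Hence at least $h$ vertices of $G$ have degree $\le h$, i.e.\ the $h$-th smallest degree satisfies $d(h)\le h$. But $1\le h<n/2$, so the hypothesis demands $d(h)\ge h+1$, which is the desired contradiction; consequently $G$ is Hamiltonian.

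The main obstacle is bookkeeping rather than a deep idea: one must check carefully that the Pósa hypothesis genuinely survives the reduction to an edge-maximal graph, that the two chord-sets really are disjoint subsets of $\{1,\dots,n-1\}$ of the claimed sizes, and — the point where the argument could go wrong — that the inequality $n-1-d(v)\ge h$ points the right way, so that the $h$ low-degree vertices we exhibit violate the sorted-degree condition precisely at index $h<n/2$. Degeneracies (small $n$, or an isolated or degree-$1$ vertex) are dispatched directly by the $k=1$ instance of the hypothesis. As an alternative I could instead run Pósa's own rotation–extension argument on a longest path, but the edge-maximality proof above is shorter and self-contained given only the definition of a Hamilton cycle.
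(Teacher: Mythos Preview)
Your argument is correct: the edge-maximality reduction, the crossing-chords estimate on the Hamilton path, and the count of low-degree non-neighbours of $v$ all go through exactly as you describe, and the contradiction at index $h$ is clean. Note, however, that the paper does not supply its own proof of \Cref{thm:posa}; it is quoted as a classical result of P\'osa~\cite{posa} and used (via the Bondy--Chv\'atal machinery~\cite{BC76}) only as input to the matching-number bound in~\cite[Theorem~S5]{DPG21}. So there is no ``paper's proof'' to compare against. Your route is essentially the Bondy--Chv\'atal closure argument specialised to P\'osa's condition, which is the standard modern presentation; P\'osa's original 1962 proof proceeds instead by the rotation--extension technique on a longest path, the alternative you mention at the end. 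Either is fine here.
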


When the graph is dense and there are not many low degree vertices,~\cite[Theorem~S5]{DPG21} (in the Supplementary Information) provides a tighter bound on the matching number, based on \Cref{thm:posa}.
\begin{definition}\label{def:tGq}
    Denote by $D_{\le q}(G)$ the number of vertices of $G$ whose degree does not
    exceed $q$. (This quantity is denoted by $t_{G}(q)$ in~\cite{DPG21}.)
\end{definition}
\begin{theorem}[{\cite[Theorem~S5]{DPG21}}]
Let $G$ be a simple graph on $n$ vertices. Let
\begin{equation*} \label{eq:rG1}
    r(G):=\min \!\left\{ \ell \in \mathbb{Z}^{+}\ :\ \max_{0\le q<\frac{n-\ell}{2}}\left(D_{\le q}(G)-q+1\right)\le \ell \right\}
\end{equation*}
then $G$ has a matching of size: $\left\lceil{\frac{n-r(G)}{2}}\right \rceil \leq \nu(G)$.
\end{theorem}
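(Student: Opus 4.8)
The plan is to reduce to Pósa's theorem (\Cref{thm:posa}) via an apex construction. Write $r:=r(G)$ and $n:=|V(G)|$. If $\lceil(n-r)/2\rceil\le 0$ there is nothing to prove, so assume $r<n$; then $n\ge 2$ since $r\ge 1$, hence $N:=n+r\ge 3$. Form $G'$ from $G$ by adding $r$ new vertices, each joined to all other vertices of $G'$ (so the new vertices induce a $K_r$ and are completely joined to $V(G)$); thus $|V(G')|=N$. Every new vertex has degree $N-1$ in $G'$, and an original vertex of degree $d$ acquires degree $d+r\le (n-1)+r=N-1$. Consequently, if $d(1)\le\cdots\le d(n)$ is the sorted degree sequence of $G$, the sorted degree sequence of $G'$ is
\[
d(1)+r\ \le\ \cdots\ \le\ d(n)+r\ \le\ \underbrace{N-1,\ \ldots,\ N-1}_{r\text{ terms}} .
\]

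I would then check that $G'$ satisfies the hypothesis of \Cref{thm:posa}. Since $r<n$ we have $N/2=(n+r)/2<n$, so every index $k$ with $1\le k<N/2$ lies in the ``original'' part of the sequence above, and it suffices to prove $d(k)+r\ge k+1$ for all such $k$. Suppose this fails, i.e.\ $d(k)\le k-r$ for some $k<N/2$ (in particular $k\ge r$ since $d(k)\ge 0$), and set $q:=d(k)\ge 0$. Then $q\le k-r<\tfrac{n+r}{2}-r=\tfrac{n-r}{2}$, so $q$ is an admissible value in the definition of $r(G)$. Moreover the $k$ vertices realising $d(1),\ldots,d(k)$ all have degree $\le q$, so $D_{\le q}(G)\ge k$, and therefore
\[
D_{\le q}(G)-q+1\ \ge\ k-(k-r)+1\ =\ r+1\ >\ r ,
\]
contradicting the defining property of $r=r(G)$. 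Hence $d(k)+r\ge k+1$ for all $1\le k<N/2$, and \Cref{thm:posa} yields that $G'$ is Hamiltonian.

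Finally I would extract a matching from a Hamilton cycle $C$ of $G'$. Deleting from $C$ the $r$ added vertices leaves a family of pairwise vertex-disjoint paths $P_1,\ldots,P_p$ with $p\le r$, whose vertex sets partition $V(G)$, and all of whose edges lie in $G$ (the only deleted vertices are the added ones, so every remaining edge of $C$ joins two original vertices). Setting $m_i:=|V(P_i)|$, so $\sum_i m_i=n$, and taking a maximum matching in each $P_i$ gives a matching of $G$ of size $\sum_{i=1}^p\lfloor m_i/2\rfloor=(n-o)/2$, where $o$ is the number of indices $i$ with $m_i$ odd; note $0\le o\le p\le r$ and $o\equiv n\pmod 2$. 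Thus $\nu(G)\ge (n-o)/2\ge (n-r)/2$, and since $\nu(G)$ is an integer and $o$ has the same parity as $n$, this in fact gives $\nu(G)\ge\lceil(n-r)/2\rceil$: when $n\not\equiv r\pmod 2$ we have $o\ne r$, hence $o\le r-1$ and $(n-o)/2\ge (n-r+1)/2=\lceil(n-r)/2\rceil$, while when $n\equiv r$ we have $(n-r)/2=\lceil(n-r)/2\rceil\le (n-o)/2$. The only genuinely delicate points are matching Pósa's inequality with the extremal definition of $r(G)$ --- especially checking that the relevant value $q$ always lies in the admissible range $0\le q<(n-r)/2$ --- and the parity bookkeeping at the end; everything else is routine.
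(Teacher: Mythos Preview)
Your proof is correct. The paper does not supply its own proof of this statement---it is quoted from \cite{DPG21} with a pointer to Bondy--Chv\'atal \cite[Theorem~5.1]{BC76}---but it does remark that the bound is ``based on \Cref{thm:posa}'', and your apex construction (adjoin $r$ universal vertices, verify P\'osa's hypothesis from the defining inequality of $r(G)$, then strip the apices from a Hamilton cycle and harvest a matching from the resulting path cover of $V(G)$) is precisely the natural way to carry that out, so your approach matches what the paper indicates.
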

This result was proved already, albeit in a slightly different form, by Bondy and Chv\'atal in 1976 (see \cite[Theorem 5.1]{BC76}).
The next result appeared in essence in~\cite{DPG21}, albeit in a slightly different form.
\begin{corollary}\label{lemma:rnu}
    For a simple graph $G$ on $n$ vertices
    \begin{equation*}
        \nu(G)\ge \min_{0\le q< \frac{n-1}{2}}\max\left ( \frac12\left(n-D_{\le q}(G)+q-1\right), q \right).
    \end{equation*}
\end{corollary}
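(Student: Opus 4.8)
The plan is to deduce \Cref{lemma:rnu} directly from \cite[Theorem~S5]{DPG21} by unfolding the definition of $r(G)$ and tracking how the bound $\lceil (n-r(G))/2 \rceil \le \nu(G)$ translates into the stated minimax expression. Write $\ell^* := r(G)$ for brevity. By definition, $\ell^*$ is the least positive integer $\ell$ with $\max_{0\le q < (n-\ell)/2}\bigl(D_{\le q}(G)-q+1\bigr)\le \ell$. The key observation is that for the purpose of lower-bounding $\nu(G)$ we do not need the exact value of $\ell^*$: any $\ell$ satisfying the displayed inequality gives $\lceil (n-\ell)/2\rceil \le \nu(G)$ only for the minimal such $\ell$, but monotonicity helps — I would first check that the family of valid $\ell$ is upward closed (if $\ell$ works then so does $\ell+1$, since enlarging $\ell$ both relaxes the right-hand side and shrinks the range of $q$), so $\ell^*$ is exactly the threshold and $\lceil(n-\ell^*)/2\rceil$ is the best bound the theorem provides.

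Next I would convert the implicit definition of $\ell^*$ into the explicit minimax. The condition ``$\ell$ is valid'' reads: for all $q$ with $0\le q<(n-\ell)/2$, $D_{\le q}(G)-q+1\le \ell$; equivalently $\ell \ge \max_{0\le q<(n-\ell)/2}(D_{\le q}(G)-q+1)$. I want to show $\lceil (n-\ell^*)/2\rceil \ge \min_{0\le q<(n-1)/2}\max\bigl(\tfrac12(n-D_{\le q}(G)+q-1),\,q\bigr)$. Fix any $q_0$ in the range $0\le q_0 < (n-1)/2$. The idea is a case split on whether $q_0$ lies below the cutoff $(n-\ell^*)/2$. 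If $q_0 < (n-\ell^*)/2$, then $q_0$ is an admissible index in the maximization defining validity of $\ell^*$, so $D_{\le q_0}(G)-q_0+1\le \ell^*$, hence $\tfrac12(n - D_{\le q_0}(G)+q_0-1)\ge \tfrac12(n-\ell^*)$, and thus $\lceil(n-\ell^*)/2\rceil \ge \tfrac12(n-D_{\le q_0}+q_0-1)$, which dominates the first argument of the max. If instead $q_0 \ge (n-\ell^*)/2$, then $q_0 \ge (n-\ell^*)/2$ gives $\lceil(n-\ell^*)/2\rceil \le q_0 + O(1)$ in the wrong direction, so here I instead use $q_0$ itself: I need $\lceil(n-\ell^*)/2\rceil \ge q_0$? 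That is false in this branch, so the correct move is to not fix $q_0$ arbitrarily but to choose $q_0$ as the largest index with $q_0 < (n-\ell^*)/2$ (or handle the boundary carefully): for that particular $q_0$, both $\tfrac12(n-D_{\le q_0}+q_0-1)\ge \tfrac12(n-\ell^*)$ from the previous paragraph and $q_0 \le \lceil(n-\ell^*)/2\rceil$ by choice, so $\max$ of the two is $\le \lceil(n-\ell^*)/2\rceil$, and since the minimum over all admissible $q$ is at most the value at this $q_0$, we get the claimed inequality. I would also separately verify the minimality of $\ell^*$ is actually used: if $\ell^* -1$ failed, there is a witness $q_1 < (n-\ell^*+1)/2$ with $D_{\le q_1}-q_1+1 \ge \ell^*$, giving a matching lower bound that matches, confirming the bound is not lossy, though strictly only the easy direction is needed for the corollary.

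I expect the main obstacle to be the bookkeeping around the floor/ceiling and the strict-versus-non-strict inequality in the range $q < (n-\ell)/2$ versus $q < (n-1)/2$: one must make sure the $q_0$ chosen as the boundary index is genuinely admissible for the minimization in \Cref{lemma:rnu} (i.e.\ $q_0 < (n-1)/2$, which holds since $\ell^*\ge 1$) and genuinely admissible or at least usable in the validity condition for $\ell^*$. A clean way to sidestep parity headaches is to prove the two inequalities $\lceil(n-\ell^*)/2\rceil \ge \tfrac12(n - D_{\le q_0} + q_0 - 1)$ and $\lceil(n-\ell^*)/2\rceil \ge q_0$ with the integer $q_0 := \lceil (n-\ell^*)/2\rceil - 1$ when this is nonnegative (and treat the degenerate small-$n$ case by hand), then note $D_{\le q_0}-q_0+1\le \ell^*$ because $q_0 < (n-\ell^*)/2$. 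After that, the corollary is immediate since $\nu(G)\ge \lceil(n-\ell^*)/2\rceil \ge \max\bigl(\tfrac12(n-D_{\le q_0}+q_0-1),q_0\bigr)\ge \min_{q}\max(\cdots)$.
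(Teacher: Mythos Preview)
Your argument has a genuine sign error that breaks the proof. You want to find a single $q_0$ in the range $0\le q_0<(n-1)/2$ with
\[
\Big\lceil\tfrac{n-\ell^*}{2}\Big\rceil\ \ge\ \max\Big(\tfrac12\big(n-D_{\le q_0}(G)+q_0-1\big),\ q_0\Big),
\]
so that the minimum over $q$ is bounded above by $\lceil(n-\ell^*)/2\rceil\le\nu(G)$. For this you need $D_{\le q_0}(G)-q_0+1\ge \ell^*$ (up to a parity $\pm 1$), i.e.\ the first argument of the $\max$ must be \emph{small}. But validity of $\ell^*$ at your chosen $q_0<(n-\ell^*)/2$ gives the opposite inequality $D_{\le q_0}(G)-q_0+1\le \ell^*$, hence $\tfrac12(n-D_{\le q_0}+q_0-1)\ge \tfrac12(n-\ell^*)$: this is a \emph{lower} bound on the first argument, not the upper bound you claim in the line ``so $\max$ of the two is $\le \lceil(n-\ell^*)/2\rceil$''. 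A concrete counterexample is $G=K_n$: here $\ell^*=1$, your $q_0=\lceil(n-1)/2\rceil-1$, and the first argument of the $\max$ is $\tfrac12(n+q_0-1)\approx \tfrac34 n$, which is strictly larger than $\lceil(n-1)/2\rceil$.

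The fix is exactly the part you dismissed as unnecessary: you must use the \emph{minimality} of $\ell^*$. Since $\ell^*-1$ is not valid (when $\ell^*>1$), there exists an integer $q_1<(n-\ell^*+1)/2$ with $D_{\le q_1}(G)-q_1+1\ge \ell^*$; this $q_1$ automatically satisfies $q_1\le (n-\ell^*)/2$ and $q_1<(n-1)/2$, and now both coordinates of the $\max$ are at most $(n-\ell^*)/2$. So your remark that ``strictly only the easy direction is needed for the corollary'' is backwards: validity alone is useless here, and the minimality witness is the whole proof. This is precisely what the paper does, phrased as establishing the identity $r(G)=\max_{0\le q<(n-1)/2}\min\big(D_{\le q}(G)-q+1,\ n-2q\big)$ and then reading off the corollary by the transformation $\tfrac12(n-\max\min)=\min\max$.
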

\begin{proof}
It is sufficient to show that
\begin{equation}\label{eq:rsimple}
r(G) =\max_{0\le q< \frac{n-1}{2}}     \min\left( D_{\le q}(G)-q+1, n-2q \right).
\end{equation}
While $0\le q<\frac{n-r(G)}{2}$, we have $n-2q>r(G)$, and for $q\ge \frac{n-r(G)}{2}$, we have $n-2q\le r(G)$, thus the right hand side of~\eqref{eq:rsimple} is at most $r(G)$. If for some $0\le q<\frac{n-r(G)}{2}$ we have $D_{\le q}(G)-q+1=r(G)$, then the right hand side is equal to $r(G)$.

    \medskip

The right hand side of~\eqref{eq:rsimple} is trivially $\ge 1$, so if $r(G)=1$, the lemma holds.

    \medskip

Suppose that $r(G)>1$ and for all $0\le q<\frac{n-r(G)}{2}$ we have $D_{\le
q}(G)-q+1<r(G)$. Let $\mu=\frac{n-r(G)}{2}$. By the minimality of $r(G)$, this implies that we must have $n\equiv r\pmod{2}$ and
    \begin{align*}
        D_{\le \mu}(G)-\mu+1\ge r(G).
    \end{align*}
By substituting $q=\mu=\frac{n-r(G)}{2}$, the right hand side of~\eqref{eq:rsimple} is at least $r(G)$.
\end{proof}

When the constant $c=1$ and even degrees are added only, the \textbf{linear DPG} model is called the \textbf{MaxDPG} model.  Using the previous estimations, paper~\cite{DPG21} managed to prove the following first-order estimate of growth for the MaxDPG process:
\begin{theorem}[\cite{DPG21}]\label{th:Max}
    Let the MaxDPG process produce the network series ${(G_n)}_{n=n_0}^\infty$ from the initial network $G_{n_0}$ (which has at least one edge). Then for large enough $n$ we have
    \begin{equation*}\label{eq:firstorder}
        d(v_n) \ge n -2 \log_2 n - \mathcal{O}(1).
    \end{equation*}
\end{theorem}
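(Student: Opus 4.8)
The plan is to convert the statement about $d(v_n)$ into one about matching numbers and then run an induction powered by Corollary~\ref{lemma:rnu}. In the MaxDPG model ($c=1$ and only even incoming degrees), the vertex $v_n$ inserted when $G_{n-1}$ is grown to $G_n$ gets p-degree $\lceil 2\nu(G_{n-1})\rceil=2\nu(G_{n-1})$, and by Principle~1 all vertex degrees are frozen from the moment a vertex appears; in particular the $n_0$ seed vertices keep their constant degrees, each at most $n_0$. So it suffices to prove
\[
    \nu(G_m)\ \ge\ \tfrac{m}{2}-\log_2 m-\mathcal O(1)
\]
for every $m$-vertex graph $G_m$ occurring in the process, since then $d(v_n)=2\nu(G_{n-1})\ge n-2\log_2 n-\mathcal O(1)$. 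As a preliminary I would record that the process is well defined, i.e.\ $\nu(G_m)\ge 1$ for all $m\ge n_0$: indeed $G_{n_0}$ has an edge, and each step (from $G_m$ to $G_{m+1}$) inserts a vertex of degree $2\nu(G_m)\ge 2$, so an edge survives.

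I would then prove $\nu(G_m)\ge \frac m2-\log_2 m-c$ by induction on $m$, for a constant $c$ depending only on the seed $G_{n_0}$ and all large enough $m$ (the finitely many small $m$ being covered by enlarging $c$). The key structural fact is that the degree sequence of $G_m$ is explicit: it is the multiset of the $\le n_0$ bounded seed degrees together with $2\nu(G_{n_0}),2\nu(G_{n_0+1}),\dots,2\nu(G_{m-1})$. Hence an inserted vertex $v_{j+1}$ can have degree at most $q$ only if $2\nu(G_j)\le q$, and by the induction hypothesis this forces $j-2\log_2 j-2c\le q$, so the index $j$ cannot be much larger than $q$. Writing $j^\ast$ for the largest such admissible index, one gets $D_{\le q}(G_m)\le j^\ast+1$, whence $D_{\le q}(G_m)-q\le 2c+1+2\log_2 j^\ast$. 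Feeding this into Corollary~\ref{lemma:rnu} yields a lower bound on $\nu(G_m)$; together with the ``$\max(\,\cdot\,,q)$'' branch for the large values of $q$, this is what produces the claimed bound.

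The hard part is making the induction close \emph{without losing a constant at each step}, which a crude estimate $\log_2 j^\ast\le\log_2 m$ would do. The remedy is to exploit the restriction $q<\frac{m-1}{2}$ built into Corollary~\ref{lemma:rnu} (inherited from the Bondy--Chv\'atal/P\'osa input, \Cref{thm:posa}): in the range of $q$ for which the P\'osa-type term is the binding one, this forces $j^\ast<\frac m2+\mathcal O(\log m)$, so $\log_2 j^\ast\le\log_2(m/2)+o(1)=\log_2 m-1+o(1)$; the extra additive $1$ in the logarithm, doubled because the inserted degrees are $2\nu$, is precisely the slack needed to re-establish the hypothesis with the same constant $c$, up to lower-order terms that disappear inside the $\mathcal O(1)$. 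For the complementary range $q\ge \frac m2-\log_2 m-c$ the bound $\nu(G_m)\ge q$ is already good enough, so no estimate on $D_{\le q}$ is needed there; the only genuine subtlety is the thin transition zone between the two ranges, where the inequality is tight and the lower-order terms must be tracked carefully.

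Finally, I would stress at the outset why the argument must pass through the frozen degree sequence rather than directly tracking the matching number: along a MaxDPG process $\nu(G_m)$ need not be monotone --- deleting a maximum matching and attaching the new vertex to its endpoints can momentarily decrease the matching number (e.g.\ the complement-type configurations that arise when the matched part becomes internally edgeless) --- so there is no shortcut that avoids feeding a P\'osa/Bondy--Chv\'atal-type bound such as Corollary~\ref{lemma:rnu} the explicit degree data. The main risk in carrying this plan out is therefore not in any single step but in the bookkeeping of that transition region, which I expect to be the place where the factor $2\log_2 n$ (rather than something larger) really gets used.
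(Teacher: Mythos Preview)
Your plan matches the paper's in all essentials: reduce to $\nu(G_m)$, feed the frozen degree sequence into Corollary~\ref{lemma:rnu}, and induct. The paper does not, however, run the induction with the $\log$ term baked into the hypothesis. It proves Lemma~\ref{lemma:linear_lower_bound} (put $c=1$): if $d(i)\ge i-K$ for all $i\le n_0$, then $d(n)\ge n-K-2$ for $n_0<n\lesssim 2n_0$. With a purely linear hypothesis one gets $D_{\le q}\le q+K+1$ exactly, and the first branch in Corollary~\ref{lemma:rnu} collapses to $\tfrac12(m-K-2)$, \emph{independent of $q$}, so there is no ``transition zone'' to manage at all. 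Iterating the doubling $\lfloor\log_2(n/n_0)\rfloor$ times loses an additive $2$ per round, which is exactly where both the coefficient $2$ in $2\log_2 n$ and the $\mathcal O(1)$ remainder come from.

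Your direct induction on $\nu(G_m)\ge m/2-\log_2 m-c$ does not close in the way you sketch. The heuristic ``$q<(m-1)/2$ forces $j^*\lesssim m/2$'' fails at the crossover: solving $q^*=m/2-c-1-\log_2 j^*(q^*)$ together with $j^*(q^*)-2\log_2 j^*(q^*)=q^*+2c$ gives $j^*(q^*)-\log_2 j^*(q^*)=m/2+c-1$, hence $j^*(q^*)=m/2+\log_2 m+\mathcal O(1)>m/2$ and $\log_2 j^*(q^*)=\log_2 m-1+\Theta\!\big((\log m)/m\big)$. The inductive step therefore regenerates the hypothesis only up to a deficit of order $(\log m)/m$, and your assertion that these deficits ``disappear inside the $\mathcal O(1)$'' is the whole difficulty: bounding their accumulation by a constant requires exploiting that the binding constraint at step $m$ only uses the hypothesis near $j\approx m/2$ (not near $m$), which is precisely the doubling structure the paper makes explicit. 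Reorganizing your argument as the paper does removes the issue entirely.
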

(Instead of proving this statement directly, we will prove a generalization of \Cref{th:Max} in
the following \Cref{lemma:linear_lower_bound}.) The edge density of $G_n$ is
$\rho_n=\frac{1}{2}- \mathcal{O} ({\log_2{n}}/{n})$, and one
can show~\cite{DPG21} that it has a core-periphery structure. More precisely, it resembles a split graph in which the
nodes are partitioned into three classes: one inducing a clique in $G_n$, another is an
independent set, and the third set contains at most $\mathcal{O}(\log_2 n)$ vertices.

\medskip

There are many real-life situations where the networks have a well-defined
\emph{core–periphery structure}: such a structure consists of a well-connected
core and a periphery that is connected to the core but sparsely connected
internally. Therefore our discussion above shows that the MaxDPG dynamics can
provide random (however, not necessarily uniformly random) examples of core-periphery networks.

\medskip

Let us now study an extension of \Cref{th:Max} for other values of $c$. We
assume that the process so far has produced the network $G_{n_0}$.
\begin{lemma}\label{lemma:linear_lower_bound}
    Let $\frac12<c\le 1$, let constant $K\ge 0$ and suppose that $d(i)\ge
    (2c-1)i-K$ holds for $1\le i\le n_0$, where $d$ is the degree sequence of
    the graph $G_{n_0}$. Then the degree of the vertices inserted
    iteratively into $G_{n_0}$ by a linear DPG process (with the multiplicative
    constant $c$) satisfy
\begin{equation*}
    d(n)\ge (2c-1)n-K-2 \quad\hbox{\ for }\quad n_0<n\le 2c\left(n_0+1-\frac{3} {2c-1}\right)-K.
\end{equation*}
\end{lemma}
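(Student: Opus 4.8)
The plan is to argue by induction on $n$, carrying as the inductive hypothesis that every vertex $v_j$ inserted so far, $n_0<j<n$, has proper degree at least $(2c-1)j-K-2$; combined with the standing assumption $d(i)\ge(2c-1)i-K$ for $i\le n_0$ this gives $d(v_j)\ge(2c-1)j-K-2$ for \emph{all} $j\le n-1$. (The base case $n=n_0+1$ is the same computation below run on $G_{n_0}$, using only the standing assumption.) For the step, the proper degree of the incoming vertex is $d(v_n)=\lceil 2c\,\nu(G_{n-1})\rceil\ge 2c\,\nu(G_{n-1})$, so it suffices to show
\[
  \nu(G_{n-1})\ \ge\ q^{*}:=\frac{(2c-1)(n-2)-K}{2c},
\]
because then $d(v_n)\ge 2c\,q^{*}=(2c-1)(n-2)-K=(2c-1)n-K-2(2c-1)\ge(2c-1)n-K-2$, the last inequality being exactly where the hypothesis $c\le 1$ is used (and where the ``$-2$'' in the statement is the natural amount of slack). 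Throughout I would work with proper degrees, which are exactly preserved by Principle~1; the at most one degree-deficient vertex (whose actual degree is one less than its proper degree) and the ceiling in $\lceil 2c\,\nu\rceil$ contribute only $O(1/(2c-1))$ corrections, which I will not track explicitly here.

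The main tool is \Cref{lemma:rnu} applied to the $(n-1)$-vertex graph $G_{n-1}$:
\[
  \nu(G_{n-1})\ \ge\ \min_{0\le q<(n-2)/2}\ \max\!\left(\tfrac12\bigl(n-1-D_{\le q}(G_{n-1})+q-1\bigr),\ q\right).
\]
The point is to feed in a good upper bound on $D_{\le q}(G_{n-1})$. A vertex of $G_{n-1}$ of degree at most $q$ must, by the degree bounds above, be one of the first $n_0$ (originally present) vertices \emph{provided} $q<(2c-1)(n_0+1)-K-2$, since every later-inserted vertex $v_j$ with $j\ge n_0+1$ has degree $\ge(2c-1)(n_0+1)-K-2$; and among the first $n_0$ vertices only those of rank $\le\frac{q+K}{2c-1}$ can qualify, so $D_{\le q}(G_{n-1})\le\frac{q+K}{2c-1}$ for all such $q$. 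Substituting, the first argument of the $\max$ is bounded below by $\ell(q):=\tfrac12\!\left(n-2-\frac{q+K}{2c-1}+q\right)$, which is affine in $q$ with non-positive slope $\frac{c-1}{2c-1}$, while the second argument $q$ is increasing; hence the $\max$ of the two is minimized where they meet, and solving $\ell(q)=q$ gives precisely $q=q^{*}$. For $q\ge q^{*}$ one trivially has $\max(\cdot,q)\ge q\ge q^{*}$; for $q\le q^{*}$ (which, by the condition below, lies in the range where the sharper estimate on $D_{\le q}$ is valid) one has $\max(\cdot,q)\ge\ell(q)\ge\ell(q^{*})=q^{*}$ by monotonicity of $\ell$. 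One also checks $q^{*}\in[0,(n-2)/2)$, which follows from $c\le 1$ and $K\ge0$. Therefore $\nu(G_{n-1})\ge q^{*}$, as needed.

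The one genuine constraint, and the origin of the upper bound on $n$, is that the sharper estimate $D_{\le q}(G_{n-1})\le\frac{q+K}{2c-1}$ must hold for every $q$ up to $q^{*}$, i.e.\ one needs $q^{*}\le(2c-1)(n_0+1)-K-2$. Rearranging $\frac{(2c-1)(n-2)-K}{2c}\le(2c-1)(n_0+1)-K-2$ and dividing by $2c-1>0$ yields an upper bound on $n$ which the stated hypothesis $n\le 2c\!\left(n_0+1-\frac{3}{2c-1}\right)-K$ comfortably implies — the extra cushion being exactly what one needs to also soak up the $O(1/(2c-1))$ slack from the ceiling operation and the single stub-edge. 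I expect this bookkeeping — pinning down how much the deficient vertex and the ceiling actually cost, and confirming that in the stated $n$-range every low-degree vertex of $G_{n-1}$ really is an original one — to be the only delicate part; the rest is an elementary one-parameter optimization inside \Cref{lemma:rnu}.
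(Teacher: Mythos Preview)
Your proposal is correct and follows essentially the same route as the paper's proof: both apply \Cref{lemma:rnu}, feed in the linear upper bound on $D_{\le q}$ coming from the degree hypothesis, optimize by equating the two branches of the $\max$, and observe that the resulting inequality reduces to $c\le 1$. Your treatment is in fact slightly more explicit than the paper's in two places---you spell out the induction (the paper's use of the bound $d(i)\ge(2c-1)i-K$ for $i>n_0$ is tacit), and you correctly identify that the sharper $D_{\le q}$ estimate is only needed for $q\le q^{*}$, which is what forces the upper bound on $n$ in the statement.
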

\begin{proof}
Let us estimate $n-D_{\le q}(G)$ from below. Let us assume that
    \begin{equation}\label{eq:valid1}
        (2c-1)(n_0+1)-K-2\ge q+1.
    \end{equation}
Note that
    \begin{equation*}
        (2c-1)i-K \ge q+1\Longleftrightarrow  i \ge \frac{q+1+K}{2c-1}
    \end{equation*}
therefore
    \begin{equation}\label{eq:linear_lower_bound}
        n-D_{\le q}(G)\ge n-\frac{q+1+K}{2c-1}.
    \end{equation}
If $(2c-1)n-K-2\ge 0$, i.e., the statement of this lemma is not trivial, then $q<\frac{n-1}{2}$ implies~\eqref{eq:valid1}. Therefore, we can substitute~\eqref{eq:linear_lower_bound} into \Cref{lemma:rnu} to     obtain
    \begin{equation*}
        \nu(G_n)\ge \min_{0\le q< \frac{n-1}{2}}\max\Big(
        \frac12\left(n-\frac{q+1+K}{2c-1}+q-1\right), q \Big).
    \end{equation*}
The right hand side is minimized by the value of $q$ for which the two arguments of $\max$ are equal, since one of the arguments of $\max$ is monotone increasing, while the other is monotone decreasing:
    \begin{equation*}
        q:=\frac{2c-1}{2c}\left(n-1-\frac{K+1}{2c-1}\right).
    \end{equation*}
By the linear DPG rule, we have
    \begin{equation*}
        d(n+1)=2\lceil c\nu(G_n)\rceil\ge (2c-1)\left(n-1-\frac{K+1}{2c-1}\right).
    \end{equation*}
The right hand side meets our wishes if
    \begin{align*}
        (2c-1)\left(n-1-\frac{K+1}{2c-1}\right)&\ge
        (2c-1)(n+1)-K-3\\
        -(K+1)&\ge 2(2c-1)-K-3\\
        2&\ge 2(2c-1)
    \end{align*}
which holds, by definition.
\end{proof}

\begin{corollary}
If the initial $n_0$ is large enough, then in the linear DPG process started
with $G_{n_0}$, as $n\to\infty$, then the degree $d(n)$ of the $n\textsuperscript{th}$ vertex satisfies
    \begin{equation*}
        d(n)\ge (2c-1)n-K-\mathcal{O}(\log_{2c}n),
    \end{equation*}
    or in words, $d(n)$ is linear in $n$.
\end{corollary}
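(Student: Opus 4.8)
The plan is to iterate \Cref{lemma:linear_lower_bound}, feeding the conclusion of one application in as the hypothesis of the next. What makes this work is that a DP-step never changes the degree of a vertex already present in the network, so after one application of \Cref{lemma:linear_lower_bound} to $G_{n_0}$ the graph $G_N$ reached at the right end $N=\big\lfloor 2c(n_0+1-\tfrac{3}{2c-1})-K\big\rfloor$ of the guaranteed window again satisfies a hypothesis of exactly the same shape: the original $n_0$ degrees are unchanged and still satisfy $d(i)\ge (2c-1)i-K\ge (2c-1)i-(K+2)$, while the degrees of the vertices inserted in steps $n_0<i\le N$ satisfy $d(i)\ge (2c-1)i-K-2=(2c-1)i-(K+2)$ by the lemma. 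Hence \Cref{lemma:linear_lower_bound} applies again to $G_N$ with $n_0$ replaced by $N$ and $K$ replaced by $K+2$.

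Carrying this out, I would set $n^{(0)}=n_0$, $K^{(m)}=K+2m$, and $n^{(m+1)}=\big\lfloor 2c(n^{(m)}+1-\tfrac{3}{2c-1})-K^{(m)}\big\rfloor$. An induction on $m$ then yields $d(n)\ge (2c-1)n-K-2(m+1)$ for every index $n$ in the window $n^{(m)}<n\le n^{(m+1)}$. To conclude I need two facts: that these windows together exhaust $\{n_0+1,n_0+2,\dots\}$ (so $n^{(m)}$ must be strictly increasing and $\to\infty$), and a bound on the window index $m$ needed to reach a given $n$, since that index is exactly what enters the error term.

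Both come from analyzing the recursion, which has the form $n^{(m+1)}\ge 2c\,n^{(m)}-(D+2m)$ with $D=D(c,K)$ a constant absorbing the floor and the lower-order terms. Dividing by $(2c)^{m+1}$ and telescoping, the normalized sequence $g_m=n^{(m)}/(2c)^m$ decreases by at most a convergent total (the series $\sum_{m\ge 0}(D+2m)(2c)^{-(m+1)}$ converges because $2c>1$), so $n^{(m)}\ge (n_0-L)(2c)^m$ for a constant $L=L(c,K)$. Taking $n_0$ above a threshold depending only on $c$ and $K$ --- this is the meaning of ``$n_0$ large enough'' --- makes $n_0-L$ positive and the sequence strictly increasing, so the windows do cover everything without gaps; and then, given $n>n_0$, choosing $m$ minimal with $n\le n^{(m+1)}$ forces $(n_0-L)(2c)^m\le n^{(m)}<n$, whence $m\le \log_{2c}n+\mathcal{O}(1)$. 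Substituting into the window estimate gives $d(n)\ge (2c-1)n-K-2(m+1)=(2c-1)n-K-\mathcal{O}(\log_{2c}n)$, as claimed.

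The only real obstacle is the bookkeeping in the previous paragraph: one must check that the additive penalty $K^{(m)}=K+2m$, which degrades linearly in the number of iterations, is genuinely swamped by the geometric growth of the window length, and one must isolate the threshold on $n_0$ cleanly enough that the induction on $m$ gets started and the windows never leave a gap. This is also the only place the hypothesis ``$n_0$ large enough'' is actually used.
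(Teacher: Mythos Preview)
The paper states this corollary without proof; it is evidently meant to follow from iterating \Cref{lemma:linear_lower_bound}, which is precisely what you do. Your bookkeeping is correct: each application of the lemma carries the hypothesis forward with $K$ replaced by $K+2$ and pushes the right endpoint out by a factor of roughly $2c$, so the number $m$ of iterations needed to reach a given $n$ is $\mathcal{O}(\log_{2c}n)$ and the accumulated penalty $2m$ is absorbed into the error term. The telescoping argument showing $n^{(m)}\ge (n_0-L)(2c)^m$ for a constant $L=L(c,K)$, and the use of ``$n_0$ large enough'' to ensure $n_0>L$ so that the windows are nonempty and contiguous, are exactly the details one needs to make the iteration rigorous.
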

Finding a maximum matching in $G_n$ is a problem solvable in polynomial time, as
demonstrated by Edmonds' blossom algorithm (\cite{edmonds}). However, finding a
\emph{random} maximum matching is a much more complicated problem, since typically
there are exponentially many maximum matchings in a dense graph. In the next part of this subsection we discuss a number of ways to deal with this problem.

\subsubsection*{Heuristics - How to find a random maximum matching?}
For a bipartite network, it is possible to quickly find a uniformly random maximum
matching (actually, this can be extended to finding a random matching of given
cardinality). This is based Jerrum and Sinclair's Markov chain method (see, for
example,~\cite{JS90}). They consider the set $\P$ of all perfect matchings in
$G$, and the set $\N(u,v)$ of all almost perfect matchings in $G$ that do not
cover the vertices $u,v$ (these vertices are called the \textbf{hole}s in the graph).
They consider the following algorithm: let $M$ be a perfect or almost perfect
matching in $G$.
\begin{enumerate}[{\rm (JS1)}]
\item If $M \in \P$, randomly choose an edge $e \in M$ and make the transition to $M \setminus \{e\}$.
\item If $M \in \N(u, v)$, randomly choose a vertex $x \in V$. If $x \in \{u, v\}$ and $u$ is adjacent to $v$, make the transition to $M \cup \{(u,v)\}\in \P$. Otherwise, let $y \in V$ be the vertex matched with $x$ in $M$, and randomly choose $w \in \{u, v\}$. If $x$ is adjacent to $w$, make the transition to the matching $M \setminus \{(x, y)\}\cup \{(x,w)\}\in \N(u,y)$.
\end{enumerate}
Jerrum and Sinclair proved that this Markov chain is fast mixing, so it will
find an almost uniform sample of sets of maximum matchings in polynomial time,
if the graph $G$ is bipartite. The method can be extended to edge-weighted
bipartite graphs (by Jerrum, Sinclair and Vigoda,~\cite{JSV04}). Unfortunately,
this method cannot be extended to graphs in general, as Štefankovič, Vigoda and
Wilmes proved in~\cite{SVW18}. They found graph classes with the following
property: the network $G$ has a large number of perfect matchings, however,
there are holes in $G$ such that the number of almost perfect matching with this
hole is constant. Fortunately, in the case of linear DPG processes, the actual
networks $G_n$ seem to be very far from this disadvantageous situation. In
fact, the symptomatic example  in~\cite{SVW18} is very close to the one depicted
in \Cref{fig:irreduc}. Therefore the JS chain provides a good candidate for
heuristics in the case of linear DPG process.

It is plausible that using genuinely uniform random maximum matchings, the
growth rate of the matching number will be different from the case when the
process uses a not necessarily randomly selected maximum matching. There is no
known evidence in this regard.

\medskip

Another question is whether the network produced by a DPG process can be
considered \emph{random} conditioned on its degree sequence. If the experimenter
desires a truly random sample from the realizations of the generated degree
sequence, then one can use the switch Markov chain to find such random example
from the initially generated network (see, for example,~\cite{P-stable}). The
switch Markov chain is known to provide high-quality random samples if the
degree sequence is $P$-stable (see \cite{P-stable}).

\subsection{Scale-free DPG}\label{sec:scale}
As demonstrated empirically in~\cite{DPG21}, the DPG process can also be used to generate real-world
like synthetic scale-free networks in such a way that the process does not inherently prefer
any vertex over another, and the degrees of already inserted vertices do not change.  
Moreover, simulations showed~\cite{DPG21} that the
generated degree sequences are indeed scale-free with the desired exponent. 
In this section we discuss the protocol in detail and prove that the generated 
degree sequence belongs to the set
of \emph{power-law distribution-bounded} degree sequences.
Wormald and Gao (2016,~\cite{GW16}) introduced this class of scale-free degree
sequences, because most real-world networks do not obey the more traditional
density-bounded power-law. 
\begin{definition}[\cite{GW16}]\label{def:powerlaw}
Let $D_i(G)$ be the set of vertices with degree $i$ in $G$. Similarly, let
$D_{\ge i}(G)$ be the number of vertices with degree greater or equal to $i$ in
$G$. Then the degree sequence of $G$ is 
\begin{itemize}
\item \textbf{power-law density-bounded} with parameters $\gamma$ and $C$, if for all $i\in[1,n]$,
			 \begin{equation*}\label{eq:powerlawdensity}
				D_i(G)\le Cni^{-\gamma}
			 \end{equation*}
\item \textbf{power-law distribution-bounded} with parameters $\gamma$ and $C$, if for all $i\in[1,n]$
			\begin{equation}\label{eq:powerlawdistrib}
                D_{\ge i}(G)=\sum_{j=i}^n D_i(G)\le \sum_{j=i}^\infty Cnj^{-\gamma}.
			\end{equation}
\end{itemize}
Notice the maximum degree is much smaller in the former class. This is analogous
to the difference between the preferential attachment and the Chung-Lu models.
\end{definition}
Also note that the parameters of a power-law distribution-bounded degree sequence
without isolated vertices satisfy $C\ge 1/{\zeta(\gamma)}$, where $\zeta$ is the Riemann zeta function.

\medskip

\textbf{Scale-free DPG protocol:}\\
\textbf{(SF)} Let $\nu:=\nu(G_n)$. Sample an integer $i$ from the interval
$[1, 2\nu]$ with probability $p_{i} = i^{-\gamma}/\sum_{j=1}^{2\nu} j^{-\gamma}$. Add a vertex of proper degree $i$ to the network via a DP-step.

\medskip

Let $(G_n)_{n=n_0}^\infty$ be generated by
the scale-free DPG protocol. We will show first that the degree
sequence $d(G_n)$ is a distribution-bounded power-law degree sequence with
parameter $\gamma$, with probability exponentially close to $1$ as the function
of a parameter $c$, which we call the level of certainty. We will also compute a bound on the
second parameter $C$, which will depend on both $\gamma$ and $c$. We will show next that
$\Delta(G_n)=\Omega(n^{1/(\gamma-1)})$, thus for large enough~$n$, $d(G_n)$ is
not a density-bounded power-law degree sequence for parameter~$\gamma>2$.

\begin{lemma}\label{lem:sfbound}
For any $\gamma>1$ and $c>0$, SF-DPG generates a sequence of graphs with  distribution-bounded power-law degree sequence with coefficient
    \begin{equation}\label{eq:sfboundC}
        C=\frac{1+\sqrt{c}}{\zeta(\gamma)-\frac{1}{\gamma-1}}
    \end{equation}
with probability which is exponentially close to $1$ (as $c$ increases). For $\gamma\ge 2$ and $c\ge \frac14$, the probability of failure is at most $12\cdot 10^{-6c}$ and $C\le 2(1+\sqrt{c})$.
\end{lemma}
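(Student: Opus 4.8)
The plan is to control the tail $D_{\ge i}(G_n)$ by tracking, for each threshold $i$, how many vertices of degree at least $i$ get created over the whole process, and to compare this count to the deterministic bound $\sum_{j\ge i} Cnj^{-\gamma}$ coming from~\eqref{eq:powerlawdistrib}. The key observation is that a newly inserted vertex receives proper degree exactly $i$ with probability $p_i = i^{-\gamma}/\sum_{j=1}^{2\nu(G_n)} j^{-\gamma}$, and since the degrees of old vertices never change (Principle~1), the only way a vertex ends up with final degree $\ge i$ is if it is inserted with proper degree $\ge i$. Hence $D_{\ge i}(G_n)$ is at most the number of insertion steps, among the first $n$, at which the sampled proper degree was $\ge i$. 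I would bound the probability of each such event: since $\sum_{j=1}^{2\nu} j^{-\gamma}\ge 1$ trivially (the $j=1$ term), we get $\Pr[\text{sampled degree}\ge i]\le \sum_{j\ge i} j^{-\gamma}$, uniformly over the step, regardless of the current matching number. So $D_{\ge i}(G_n)$ is stochastically dominated by a sum of $n$ independent Bernoulli variables each with success probability $q_i:=\sum_{j\ge i} j^{-\gamma}$, i.e.\ by $\mathrm{Bin}(n,q_i)$ with mean $\mu_i = n q_i$.

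Next I would apply a Chernoff/Bernstein tail bound to $\mathrm{Bin}(n,q_i)$ to show that $D_{\ge i}(G_n)\le (1+\sqrt c)\,\mu_i$ with probability at least $1-\exp(-\Theta(c\,\mu_i))$ — this is where the factor $1+\sqrt c$ in~\eqref{eq:sfboundC} enters: choosing the deviation to be $\sqrt c$ relative, the multiplicative Chernoff bound gives failure probability roughly $\exp(-c\mu_i/3)$ or similar. Then I would take a union bound over the relevant thresholds $i\in[1,n]$. The subtlety is that for small $i$ the mean $\mu_i$ is of order $n$ and the bound is extremely strong, while for large $i$ (near $n$) $\mu_i$ can be $o(1)$, so the relative-deviation Chernoff bound degrades; there one instead argues directly that $\Pr[D_{\ge i}(G_n)\ge 1]\le \mu_i$ is already $\le 12\cdot 10^{-6c}$ once $i$ is large enough that $\mu_i$ is tiny, or one folds these terms into the union bound using that $\sum_i \mu_i$ converges fast enough. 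Summing the failure probabilities over all $i$ and bounding the geometric-like series yields the stated $12\cdot10^{-6c}$ in the regime $\gamma\ge 2$, $c\ge \tfrac14$.

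Finally, on the event that $D_{\ge i}(G_n)\le (1+\sqrt c)\,\mu_i = (1+\sqrt c)\,n\sum_{j\ge i} j^{-\gamma}$ for all $i$, I need to convert this into the form~\eqref{eq:powerlawdistrib}, i.e.\ $D_{\ge i}(G_n)\le \sum_{j\ge i} Cn j^{-\gamma}$ with $C$ as in~\eqref{eq:sfboundC}. Since $\sum_{j\ge i} j^{-\gamma}\le \sum_{j\ge 1} j^{-\gamma} = \zeta(\gamma)$ is not quite the comparison I want, I would instead use the cleaner estimate $\sum_{j\ge i} j^{-\gamma}\le \tfrac{1}{\zeta(\gamma)-\frac{1}{\gamma-1}}\sum_{j\ge i} C' j^{-\gamma}$ after noting $\sum_{j\ge 1} j^{-\gamma} - \int_1^\infty x^{-\gamma}dx = \zeta(\gamma) - \frac{1}{\gamma-1}$, which is the normalizing constant appearing in the denominator of~\eqref{eq:sfboundC}; a term-by-term comparison of $\sum_{j\ge i} j^{-\gamma}$ with $\int$ gives the needed inequality for every $i$ simultaneously. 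Taking $C = (1+\sqrt c)/(\zeta(\gamma)-\frac1{\gamma-1})$ then closes the argument, and for $\gamma\ge 2$ one checks $\zeta(\gamma)-\frac{1}{\gamma-1}\ge \tfrac12$ so that $C\le 2(1+\sqrt c)$.

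The main obstacle I expect is the union bound over the full range $i\in[1,n]$: the naive sum of $n$ failure probabilities is wasteful for large $i$, and getting the clean constant $12\cdot 10^{-6c}$ requires handling the large-$i$ tail separately (where a first-moment bound $\Pr[D_{\ge i}\ge1]\le\mu_i$ beats Chernoff) and summing the resulting mixed bound carefully. Everything else — the stochastic domination, the Chernoff step, and the zeta-function comparison — is routine once the domination by $\mathrm{Bin}(n,q_i)$ is set up correctly, the one genuinely important point being that $\sum_{j=1}^{2\nu} j^{-\gamma}\ge 1$ lets us decouple the per-step tail probability from the (hard-to-control) evolution of $\nu(G_n)$.
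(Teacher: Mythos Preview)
Your strategy---dominate $D_{\ge i}(G_n)$ by a binomial via a uniform upper bound on the per-step tail probability, then concentrate and union-bound---is the paper's, but three details differ. First, the denominator $\zeta(\gamma)-\tfrac{1}{\gamma-1}$ in $C$ arises in the paper as its \emph{lower bound on the normalizing constant}: it uses $\sum_{j=1}^{2\nu}j^{-\gamma}=\zeta(\gamma)-\zeta(\gamma,2\nu+1)\ge \zeta(\gamma)-\tfrac{1}{\gamma-1}$ (integral tail estimate plus $\nu\ge 1$), which feeds directly into $\mathbb{E}[D_{\ge i}(G_n)]\le \tfrac{n\,\zeta(\gamma,i)}{\zeta(\gamma)-1/(\gamma-1)}$; the target $Cn\zeta(\gamma,i)$ is then exactly $(1+\sqrt c)$ times this mean bound. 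Your trivial bound $\sum_{j=1}^{2\nu}j^{-\gamma}\ge 1$ is in fact \emph{stronger} (since $\zeta(\gamma)-\tfrac{1}{\gamma-1}<1$ for every $\gamma>1$), so you already have $(1+\sqrt c)\,n\zeta(\gamma,i)\le Cn\zeta(\gamma,i)$ and the ``conversion'' in your third paragraph is unnecessary and confused. Second, the paper uses Hoeffding's inequality (additive deviation $t=\sqrt c\cdot\mathbb{E}[D_{\ge i}]$), not multiplicative Chernoff, obtaining per-$(i,n)$ failure probability $\exp(-2t^2/n)$; it then asserts a uniform-in-$i$ bound on the exponent and sums $\sum_{n\ge 3} n\varepsilon^n$ with $\varepsilon=\exp\bigl(-2c/((\gamma-1)\zeta(\gamma)-1)^2\bigr)$ to produce the constant $12\cdot 10^{-6c}$. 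Your Chernoff exponent $\Theta(c\mu_i)$ has a different shape, which is why you are forced into the small-$\mu_i$ regime split; the paper's presentation does not split into regimes at all. Third, the lemma concerns the whole sequence $(G_n)_n$, and the paper explicitly takes a union bound over all $n\ge 3$---a step your proposal omits.
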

\begin{proof}
The value of the Riemann zeta function $\zeta(\gamma)=\sum_{j=1}^\infty
j^{-\gamma}$ is finite for any $\gamma>1$. Let $\zeta(\gamma,i):=\sum_{j=i}^\infty j^{-\gamma}$. For $i>1$, we have:
\begin{align*}
    \zeta(\gamma,i)&<\int_{i-1}^\infty j^{-\gamma}dj= \frac{1}{\gamma-1}{(i-1)}^{1-\gamma}\\
	\zeta(\gamma,i)&>\int_{i}^\infty j^{-\gamma}dj= \frac{1}{\gamma-1}i^{1-\gamma}
\end{align*}
From these bounds it follows that $C>1$ (take $i=2$). By construction, the number of vertices of degree at least $1$  in $G_n$ is $n$, i.e., none of the vertices are isolated. In order to satisfy  \Cref{eq:powerlawdistrib} for $i=1$, we need
    \begin{equation*}
	    n\le Cn\cdot\zeta(\gamma),
    \end{equation*}
which holds, as both $C\ge 1$ and $\zeta(\gamma)\ge 1$. Recall that $D_i(G_n)$
is the number of vertices of degree $i$ in $G_n$. For $i\ge 2$, the expected number of vertices of degree at least $i$ in $G_n$ is
    \begin{align*}
        \sum_{j=i}^{n-1}\mathbb{E}(D_j(G_n))&\le\sum_{j=i+1}^n
        \frac{\sum_{k=i}^{j-1} k^{-\gamma}}{\zeta(\gamma)-\frac{1}{\gamma-1}j^{1-\gamma}}\le
        \frac{1}{\zeta(\gamma)-\frac{1}{\gamma-1}}\sum_{j=i+1}^n
        \sum_{k=i}^{j-1}k^{-\gamma} \\
	&\le
	\frac{1}{\zeta(\gamma)-\frac{1}{\gamma-1}}\sum_{k=i}^{n-1}(n-k)k^{-\gamma}\le
	\frac{n}{\zeta(\gamma)-\frac{1}{\gamma-1}}\zeta(\gamma,i)
    \end{align*}
    The quantity $D_{\ge i}(G_n)$ can be estimated from above by the sum of $n-i$
independent indicators. Therefore, by Hoeffding's inequality, we have
    \begin{equation*}
        \Pr\big(D_{\ge i}(G_n)-\mathbb{E}(D_{\ge i}(G_n))>t\big)\le e^{-2t^2/(n-i)}\le e^{-2t^2/n}
    \end{equation*}
Substituting $t=\frac{\sqrt{c}n}{\zeta(\gamma)-\frac{1}{\gamma-1}}\zeta(\gamma,i)$:
    \begin{align*}
        \Pr&\left(D_{\ge i}(G_n)|>Cn\zeta(\gamma,i)\right)\le
        \Pr\left(D_{\ge i}(G_n)-\mathbb{E}(D_{\ge i}(G_n))>
            \frac{\sqrt{c}n}{\zeta(\gamma)-\frac{1}{\gamma-1}}\zeta(\gamma,i)\right)\le \\
           &\le\exp\left(-2\frac{cn}{{(\zeta(\gamma)-\frac{1}{\gamma-1})}^2}{\zeta(\gamma,i)}^2\right)
               \le\exp\left(-\frac{2cn}{{((\gamma-1)\zeta(\gamma)-1)}^2}\right)
    \end{align*}
    The probability that at least one of the bad events occur for $n$:
    \begin{align*}
        \Pr\big(\exists i\text{ s.t.\ }D_{\ge i}(G_n)> Cn\zeta(\gamma,i)\big)
               \le n\cdot \exp\left(-\frac{2cn}{{((\gamma-1)\zeta(\gamma)-1)}^2}\right).
    \end{align*}
    Let $\varepsilon:=\exp(-\frac{2c}{{((\gamma-1)\zeta(\gamma)-1)}^2})$ and
    $f(x):=\sum_{n=3}^\infty n\cdot{(\varepsilon x)}^n$. Since $\gamma>1$ and $c>0$,
    we have $\varepsilon<1$, therefore $f(1)$ is well-defined.
    The probability of not succeeding is
    \begin{align*}
        \Pr\Big(\exists n\ge 3\text{ s.t.\ }G_n\text{ does not satisfy
        \eqref{eq:powerlawdistrib}}\Big)&\le f(1).
    \end{align*}
    It is easy
    to see that $F(x)=\frac{(\varepsilon x)^3}{1-\varepsilon x}$ is a primitive
    function of $f(x)$ for $\varepsilon<1$.
    If $\varepsilon<\frac12$, then
    \begin{align*}
        f(1)\le\left(\frac{(\varepsilon x)^3}{1-\varepsilon
        x}\right)^\prime(1)=\frac{\varepsilon^3(3-2\varepsilon)}{(1-\varepsilon)^2}\le
        12\varepsilon^3.
    \end{align*}
    For $\gamma\ge 2$,
    \begin{equation*}
        \varepsilon\le\exp\left(-\frac{2c}{{(\zeta(2)-1)}^2}\right)<10^{-2c}.
    \end{equation*}
\end{proof}

Next we prove an improved lower bound on the matching number to be used later to
estimate the maximum degree.
\begin{lemma}[A generalized Vizing-bound]\label{lem:general_vizing}
    Let $G$ be a graph of order $n$, and let $d=d(G)$ be the degree sequence of
    $G$. Then
	\begin{equation*}
		\nu(G)\ge \max_{1\le q< n} \frac{e(G)-\sum_{i\ge q}i\cdot D_i(G)}{q}.
	\end{equation*}
\end{lemma}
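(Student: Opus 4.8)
The plan is to fix an integer $q$ with $1\le q<n$, delete all vertices of degree at least $q$, and apply Vizing's theorem (\Cref{thm:vizing}) to the low-degree remainder. Concretely, set $S:=\{v\in V(G):d_G(v)\ge q\}$ and let $H:=G-S$ be the subgraph induced on $V(G)\setminus S$. Since every matching of $H$ is a matching of $G$, we have $\nu(G)\ge\nu(H)$, so it suffices to lower-bound $\nu(H)$ and then maximise over $q$.

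First I would lower-bound $e(H)$. Each edge of $G$ that is not present in $H$ has at least one endpoint in $S$, hence is counted at least once by the degree sum $\sum_{v\in S}d_G(v)=\sum_{i\ge q}i\cdot D_i(G)$; therefore $e(G)-e(H)\le\sum_{i\ge q}i\cdot D_i(G)$, i.e.
\[
  e(H)\ \ge\ e(G)-\sum_{i\ge q}i\cdot D_i(G).
\]
Next, every vertex of $H$ has degree at most $q-1$ in $G$, and no larger in $H$, so $\Delta(H)\le q-1$. Vizing's theorem then gives $\chi'(H)\le\Delta(H)+1\le q$, and a proper edge-colouring with at most $q$ colours partitions $E(H)$ into at most $q$ matchings, the largest of which has at least $e(H)/q$ edges; hence $\nu(H)\ge e(H)/q$ (this holds trivially when $e(H)=0$). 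Chaining the inequalities,
\[
  \nu(G)\ \ge\ \nu(H)\ \ge\ \frac{e(H)}{q}\ \ge\ \frac{e(G)-\sum_{i\ge q}i\cdot D_i(G)}{q},
\]
and taking the maximum over $1\le q<n$ yields the stated bound.

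I do not expect a genuine obstacle here; the two points needing care are (i) the mild double-counting of edges having both endpoints in $S$, which only strengthens $e(G)-e(H)\le\sum_{v\in S}d_G(v)$, and (ii) the degenerate cases $S=V(G)$ or $e(H)=0$, where the right-hand side is non-positive and the bound is vacuous. The argument is a direct generalisation of the estimate $\nu(G)\ge e(G)/(\Delta(G)+1)$ recorded after \Cref{thm:vizing}: taking $q=\Delta(G)+1$ makes $S=\emptyset$ and $H=G$, recovering that inequality.
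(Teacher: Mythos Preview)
Your proof is correct and follows exactly the same approach as the paper: delete the vertices of degree at least $q$ and apply Vizing's theorem to the remaining graph to extract a large colour class. The paper's proof is a two-sentence sketch of precisely this argument; you have simply filled in the details (the edge-count estimate for $e(H)$ and the handling of degenerate cases).
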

\begin{proof}
Delete the vertices whose degree is at least $q$. By Vizing's theorem, there is a color class in the remaining graph whose size is at least $\frac{\chi'}{\Delta+1}$.
\end{proof}

\begin{lemma}
    For any $\gamma>2$, SF-DPG generates a distribution-bounded power-law
    degree sequence such that $\nu(G_n)\ge t(\gamma,c)\cdot n$ for all $n$,
    with high probability (as $c\to \infty$). The function $t(\gamma,c)$ is positive and depends
    on $\gamma$ and on the level of certainty $c$ from \Cref{lem:sfbound}.
\end{lemma}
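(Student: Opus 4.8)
The plan is to work entirely on the high-probability event of \Cref{lem:sfbound}. Condition on the event $\mathcal A$ that for every $n$ and every $i\ge 1$ the degree sequence of $G_n$ satisfies $D_{\ge i}(G_n)\le C\,n\,\zeta(\gamma,i)$, where $C=C(\gamma,c)$ is the coefficient in \eqref{eq:sfboundC} and $\zeta(\gamma,i)=\sum_{j\ge i}j^{-\gamma}$. By \Cref{lem:sfbound} we have $\Pr(\mathcal A)\to 1$ as $c\to\infty$, and on $\mathcal A$ the degree sequence is already distribution-bounded power-law; so it only remains to derive the matching bound, which will be a deterministic consequence of $\mathcal A$. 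The tool is the generalized Vizing bound \Cref{lem:general_vizing}, i.e.\ $\nu(G_n)\ge\bigl(e(G_n)-\sum_{i\ge q}i\,D_i(G_n)\bigr)/q$ for any $1\le q<n$; note the bound of \Cref{lemma:rnu} is too weak here, since $G_n$ has linearly many vertices of degree $1$.

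First I would bound the tail mass. Abel summation gives $\sum_{i\ge q}i\,D_i(G_n)=q\,D_{\ge q}(G_n)+\sum_{i>q}D_{\ge i}(G_n)$; feeding in the estimates from $\mathcal A$ together with the same identity applied to the sequence $j\mapsto j^{-\gamma}$, namely $q\,\zeta(\gamma,q)+\sum_{i>q}\zeta(\gamma,i)=\zeta(\gamma-1,q)$ with $\zeta(\gamma-1,q):=\sum_{j\ge q}j^{1-\gamma}$ (finite since $\gamma>2$), yields
\begin{equation*}
    \sum_{i\ge q}i\,D_i(G_n)\ \le\ C\,n\,\zeta(\gamma-1,q).
\end{equation*}
On the other hand every vertex of $G_n$ has degree at least $1$ (up to the single possible degree-deficient vertex), so $\sum_{1\le i<q}i\,D_i(G_n)\ge\sum_{1\le i<q}D_i(G_n)=D_{\ge 1}(G_n)-D_{\ge q}(G_n)\ge n-1-C\,n\,\zeta(\gamma,q)$. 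Since $2e(G_n)=\sum_{1\le i<q}i\,D_i(G_n)+\sum_{i\ge q}i\,D_i(G_n)$, subtracting gives
\begin{equation*}
    e(G_n)-\sum_{i\ge q}i\,D_i(G_n)\ \ge\ \tfrac12\bigl(n-1-C\,n\,\zeta(\gamma,q)-C\,n\,\zeta(\gamma-1,q)\bigr).
\end{equation*}

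To conclude, choose $q=q(\gamma,c)$ to be the least integer with $C\bigl(\zeta(\gamma,q)+\zeta(\gamma-1,q)\bigr)\le\tfrac14$; it exists because $\gamma>2$ forces both $\zeta(\gamma,q)$ and $\zeta(\gamma-1,q)$ to tend to $0$ as $q\to\infty$, and it depends on $\gamma$ and $c$ only through $C$. Then for every $n>q$ the right-hand side of the last display is at least $n/8$, so \Cref{lem:general_vizing} applied with this $q$ gives $\nu(G_n)\ge n/(8q)$; for the finitely many $n\le q$ the trivial bound $\nu(G_n)\ge 1$ already exceeds $n/(8q)$. Hence $\nu(G_n)\ge t(\gamma,c)\,n$ for all $n$ with $t(\gamma,c):=1/(8q)>0$, which together with the power-law property from \Cref{lem:sfbound} establishes the lemma on the event $\mathcal A$.

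The only delicate point is the linear lower bound on the ``core'' quantity $e(G_n)-\sum_{i\ge q}i\,D_i(G_n)$. A direct estimate would try to show $e(G_n)=\Omega(n)$, which appears to require tracking the random incoming degrees --- whose law depends on $\nu(G_j)$ for $j<n$, creating a bootstrapping loop with the very quantity being bounded. The observation that removes this difficulty is that no lower bound on $e(G_n)$ is needed: it suffices to bound below the number of edge-endpoints at low-degree vertices, $\sum_{1\le i<q}D_i(G_n)=n-D_{\ge q}(G_n)-O(1)$, which is controlled purely by the power-law \emph{upper} bound already supplied by \Cref{lem:sfbound}. After that, everything reduces to routine manipulation of the $\zeta$-sums.
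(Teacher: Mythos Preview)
Your proof is correct and follows essentially the same route as the paper: condition on the event of \Cref{lem:sfbound}, bound the tail $\sum_{i\ge q}iD_i(G_n)\le Cn\,\zeta(\gamma-1,q)$, feed this into the generalized Vizing bound \Cref{lem:general_vizing}, and choose $q$ depending only on $\gamma,c$.

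One remark: your closing paragraph overcomplicates the picture. You claim that a direct lower bound $e(G_n)=\Omega(n)$ would require a bootstrapping argument through the random incoming degrees, and you avoid this by splitting $2e(G_n)$ into low- and high-degree contributions. But you already used the fact that every vertex has degree at least $1$; this gives $e(G_n)\ge (n-1)/2$ immediately, with no bootstrapping needed. The paper does exactly this, obtaining the cleaner numerator $\tfrac12 n - Cn\,\zeta(\gamma-1,q)$ in place of your $\tfrac12\bigl(n-1-Cn\,\zeta(\gamma,q)-Cn\,\zeta(\gamma-1,q)\bigr)$. Your extra $-Cn\,\zeta(\gamma,q)$ term is harmless but unnecessary, and the ``delicate point'' you highlight is not actually delicate.
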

\begin{proof}
    From \Cref{lem:sfbound} and then \Cref{lem:general_vizing}, we have, with high probability:
    \begin{align*}
        \nu(G_n)&\ge\max_{1\le q< n}
        \frac{e(G_n)-\sum_{i\ge q}i\cdot D_i(G_n)}{q}\ge\\
        & \ge\max_{1\le q< n}\frac{\frac12n-Cn\zeta(\gamma-1,q)}{q+1}\ge
        n\cdot\max_{1\le q< n}\frac{\frac12-\frac{C}{\gamma-2}{(q-1)}^{2-\gamma}}{q+1},
    \end{align*}
    where $C$ is defined on \cref{eq:sfboundC}.
Substituting $q={(\frac{4C}{\gamma-2})}^\frac{1}{\gamma-2}+1$, it follows that
    \begin{equation*}
        \nu(G_n)\ge  \frac{n}{4q+8}
    \end{equation*}
    holds for every $n\ge 2$ with high probability. The lower bound is indeed linear in $n$, since $C$
    only depends on the values of $c$~and~$\gamma$.
\end{proof}
The linearity of $\nu(G_n)$ implies that SF-DPG creates a vertex of degree
$\Omega(n^{1/(1-\gamma)})$ with high probability. In other words,
the degree sequences created by SF-DPG do not obey the more restrictive
power-law density-bound, where the maximum degree is $\mathcal{O}(n^{1/\gamma})$.

\paragraph{Further remarks and discussion.} Since every vertex in $G_{n-1}$ can
contribute with at most one edge to a matching, independently of their degree,
the formation of edges is not based on direct degree preference.  This is in
contrast with the Barab\'asi-Albert preferential attachment model, the
configuration model~\cite{Bollobas,Molloy}, and the Chung-Lu model~\cite{CL02}.

In general, it can be said that the process provides an ever-growing degree
sequence that is scale-free with the given parameter $\gamma>2$. However, it is
not clear how random the network $G_n$ is among all possible realizations of
its degree sequence. 

\medskip 

Fortunately, there is a known way to improve the quality of the sample. As Gao and
Greenhill proved in a recent paper (\cite{GG21}), such scale-free degree
sequences satisfy the so-called $P$-stable property. It ensures that the switch
Markov chain on the realizations of the generated degree sequence is mixing
rapidly. Therefore the application of the switch Markov chain will provide a
truly random realization of the degree sequence in polynomial time as a
function of the length of the degree sequence. (For details see~\cite{GG21}
or~\cite{P-stable}.) We expect that the advantage gained is that starting the switch Markov chain from the output of a DPG process should cut down on the relaxation time. Therefore, one can sample a random scale-free degree
sequence with the given parameter $\gamma > 2$, with a realization which is uniformly and
randomly chosen from all possible realizations.

\section{Regular graphs}\label{sec:reg}
Another useful version of the general DPG process is when the incoming vertex
degree is a constant $c$. If $c=2k$, then the process is rather straightforward. In
the case of an odd constant $c=2k+1$, only every second network will be $c$
regular, in the sequence. The number of edges in the $c$-regular network $G$ on $n$ vertices is
roughly $n c/2$. By Vizing's theorem, we have $\nu(G) \ge c/2$ so the
DP-step will succeed in each step.

\medskip

This dynamic model of ever growing $c$-regular network series  provides (relatively) uniformly random networks. Of course, it cannot be truly uniformly generated, since there are regular networks without a possible inverse DP-step. Therefore the network itself cannot be the result of a DPG process. (For $c=4$ see our \Cref{ex:K4}.) We will see below how we may ``randomize'' these networks.

\medskip

The DPG model for even-degree regular networks is not completely new. The case
$c=4$ played an important role for client-server architectures in peer-to-peer
networking, called SWAN technology~\cite{Bou03}. SWAN networks are very reliable and efficient TCP/IP
fabrics of connections~\cite{Holt05}. In this protocol, there is a 4-regular
dynamic network of TCP/IP network servers (or any other kinds of agents). A vertex
(agent) either wants to leave the network or wants to join to it. In the first case, an
DP-removal is performed on a degree 4 vertex (the agent leaves and the severed
links are reconnected). In the second case 2 edges are randomly chosen and a
forward DP-step is performed. In the original algorithm this step is called the
``clothespinning'' procedure. The same procedure was also used in 2D
vortex liquids analysis~\cite{HMM94}.

\medskip

Cooper, Dyer and Greenhill studied this clothespinning procedure in detail in
their rather technical paper~\cite{CDG07}. During the dynamic process the
network is decreasing and increasing in size, between some roughly predefined
boundaries. Their goal was to determine whether the clothespinning process
uniformly samples the set of all 4-regular networks between these size bounds.
Their complicated analysis gave an affirmative answer. According to their paper,
the same statement applies in general for any $2k$ regular networks as well.

\medskip

Intuitively that means that a regular graph generated by the DPG process can be
``randomized'' by a series of forward / inverse DP-steps. It remedies the
problem that there are regular networks that are unreachable by the ``regular''
forward DPG process (like \Cref{ex:K4}).

\section{The DP-removal of many vertices is NP-complete}\label{sec:NP}

As promised in \Cref{sec:prop}, we prove that deciding whether at least
$n^\varepsilon$ DP-removals ($\epsilon >0$) can be performed in a given graph is
\textsc{NP}-complete. The problem remains \textsc{NP}-hard even if the maximum
degree is a small constant. We will also see that the source of complexity is
not necessarily in finding the order in which the given set of vertices needs to be
removed.

\begin{observation}\label{obs:commutative}
    If $I\subset V(G)$ is an independent set, then the DP-removal of any two vertices of $I$ are commutative.
\end{observation}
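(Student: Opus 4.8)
The plan is to use the fact that a DP-removal of a vertex $w$ is \emph{local}: it is determined by the choice of a flip-set $M_w$ of independent non-edges inside $\Gamma_{G^s}(w)$ (a perfect matching of the appropriate complement, in the sense of InvOp.~1, 2, 3a or 3b), after which one turns the non-edges of $M_w$ into edges and deletes $w$ together with its incident edges. In particular, the only change a DP-removal of $w$ makes to the induced subgraph on $V(G)\setminus\{w\}$ is the addition of the edges of $M_w$, all of which lie inside $\Gamma_G(w)$. Since $u,v\in I$ with $uv\notin E(G)$, we have $u\notin\Gamma_G(v)$ and $v\notin\Gamma_G(u)$, so no edge of $M_u$ is incident to $v$ and no edge of $M_v$ is incident to $u$, and neither removal deletes any edge incident to the other vertex.

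The key step is to show that a valid sequence ``remove $u$, then remove $v$'' can be replayed in the order ``remove $v$, then remove $u$'' using the \emph{same} flip-sets, and conversely. So suppose $M_u$ is a valid flip-set for removing $u$ from $G$, and that in $(G-u)+M_u$ the set $M_u^{\perp}$ is a valid flip-set for removing $v$. Since $M_u^{\perp}$ consists of non-edges of $(G-u)+M_u$ while $M_u$ consists of edges of that graph, the two sets are disjoint. Now $M_u^{\perp}$ consists of non-edges of $G$ inside $\Gamma_G(v)$ (adding $M_u$ only created edges inside $\Gamma_G(u)$, none incident to $v$, and deleting $u$ removed no edge inside $\Gamma_G(v)$ because $u\notin\Gamma_G(v)$); moreover $\Gamma_{G^s}(v)$ has the same vertex set in $G$ and in $(G-u)+M_u$, and the complement of $\Gamma_{G^s}(v)$ in $G$ contains the complement of $\Gamma_{G^s}(v)$ in $(G-u)+M_u$, so a perfect matching of the latter is a perfect matching of the former. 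Hence $M_u^{\perp}$ is a valid removal of $v$ directly from $G$. After performing it, $M_u$ still consists of non-edges (it is disjoint from $M_u^{\perp}$), it still spans $\Gamma_G(u)$, whose vertex set is unchanged, so $M_u$ is a valid removal of $u$ from $(G-v)+M_u^{\perp}$. In either order the induced subgraph on $V(G)\setminus\{u,v\}$ becomes $G[V(G)\setminus\{u,v\}]+M_u+M_u^{\perp}$; by symmetry this gives both inclusions, so the two DP-removal relations commute.

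The last thing to verify is the bookkeeping of the stub-node $s$, and this is where I expect the only real care to be needed. The point is again that $uv\notin E(G)$: the stub-edge can be incident to a vertex $x$, and if $x\in\Gamma_{G^s}$ of the other vertex then that vertex is adjacent to $x$, so $x\notin\{u,v\}$ unless $uv\in E(G)$. Consequently neither removal ever makes the other vertex degree-deficient, the parity case (InvOp.~1 versus 2 versus 3a/3b) that applies to $v$ is the same before and after removing $u$, and the endpoint of the stub-edge after both removals is pinned down by the same choices in either order. Modulo this stub accounting, the combinatorial content is exactly the disjointness $M_u\cap M_u^{\perp}=\emptyset$ together with the locality of DP-removals described above.
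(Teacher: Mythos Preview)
The paper states this as an observation without proof, so your write-up already goes well beyond what the authors supply. Your core argument --- locality of a DP-removal (it only adds edges inside $\Gamma_G(w)$ and deletes edges incident to $w$), independence of $u,v$ forcing $u\notin\Gamma_G(v)$ and $v\notin\Gamma_G(u)$, and the automatic disjointness $M_u\cap M_u^{\perp}=\emptyset$ --- is exactly the right mechanism, and it cleanly establishes commutativity in the even-degree setting (InvOp.~1 only). This is all the paper ever needs: in the NP-hardness proof where \Cref{obs:commutative} is invoked, every vertex of $X\cup C$ has even degree.

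There is, however, a genuine slip in your stub paragraph. You assert that ``the parity case (InvOp.~1 versus 2 versus 3a/3b) that applies to $v$ is the same before and after removing $u$'', but this is false when both $u$ and $v$ have odd degree and $d(s)=0$: removing $u$ via InvOp.~2 creates a stub at some $x_u\in\Gamma_G(u)$, so $d(s)$ jumps from $0$ to $1$, and the applicable rule for $v$ switches from InvOp.~2 to InvOp.~3b. In that situation the two orderings use structurally different operations (one leaves the stub inside $\Gamma(u)$, the other inside $\Gamma(v)$), and replaying with ``the same flip-sets'' no longer makes literal sense; one has to rematch the stub endpoint across the two neighborhoods. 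This can still be made to work, but it requires an extra argument you have not given (and the paper does not give either). Since the paper's only use of the observation is in the stub-free even case, your main argument suffices for its purposes; just be aware that your final paragraph, as written, does not actually handle the odd--odd interaction.
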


\begin{observation}\label{obs:removable}
    If a vertex $x\in V(G)$ is DP removable from $G$ then
    $\overline{G}[\Gamma_G(x)]$ contains a matching of size $\lfloor
    \frac12d_G(x)\rfloor$.
\end{observation}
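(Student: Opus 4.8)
The plan is to unwind the definition of a \emph{DP-removal} and then check the four admissible inverse operations one by one. If $x$ is DP-removable from $G$, then $x$ is the vertex that plays the role of $w$ in one of InvOp~1, InvOp~2, InvOp~3a, or InvOp~3b, and each of these operations begins by selecting a set $M$ of pairwise independent \emph{non-edges} of $G$ --- that is, edges of $\overline{G}$ --- whose endpoints all lie in $\Gamma_G(x)$, or in $\Gamma_G(x)$ together with one auxiliary vertex. So the whole argument comes down to reading off, in each of the four cases, the size of $M$ and which vertices it covers.

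First I would dispatch the three ``clean'' cases. In InvOp~1 we have $d_G(x)=2k$ and $M$ is a set of $k=\lfloor\frac12 d_G(x)\rfloor$ independent non-edges of $G$ inside $\Gamma_G(x)$, so $M$ itself is the desired matching of $\overline{G}[\Gamma_G(x)]$; InvOp~2 and InvOp~3a are of the same kind, with $M$ a set of exactly $\lfloor\frac12 d_G(x)\rfloor$ independent non-edges of $G$ contained in $\Gamma_G(x)$. The one case requiring an extra step is InvOp~3b, where $d_G(x)=2k+1$ and $M$ is a set of $k+1$ independent non-edges inside $\overline{G}[\Gamma_G(x)\cup\{u\}]$, with $u\notin\Gamma_G(x)$ the vertex that inherits the stub after the removal. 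Here I would delete from $M$ the unique edge covering $u$; the remaining $k=\lfloor\frac12 d_G(x)\rfloor$ independent non-edges all have their endpoints in $\Gamma_G(x)$, giving the required matching of $\overline{G}[\Gamma_G(x)]$.

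I do not expect a genuine obstacle: once the inverse operations are spelled out, the statement is essentially a restatement of the definition. The only point that will call for a little care is the off-by-one bookkeeping introduced by the auxiliary vertices --- the stub-node $s$ (which appears in InvOp~1 when $x$ itself carries a stub, so that $d_G(x)$ is odd) and the vertex $u$ in InvOp~3b --- where one edge of the selected non-edge matching may have to be discarded. In every one of the four cases the matching that survives inside $\Gamma_G(x)$ still has size at least $\lfloor\frac12 d_G(x)\rfloor$, which is exactly the claimed bound.
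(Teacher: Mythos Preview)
Your case analysis is correct and complete. The paper itself does not supply a proof for this observation --- it is stated as immediate from the definitions of the inverse operations --- and your unwinding of InvOp~1 through InvOp~3b is exactly the verification one would carry out, including the one subtle point you flag (discarding the non-edge through $s$ in the stub-carrying subcase of InvOp~1, and the non-edge through $u$ in InvOp~3b).
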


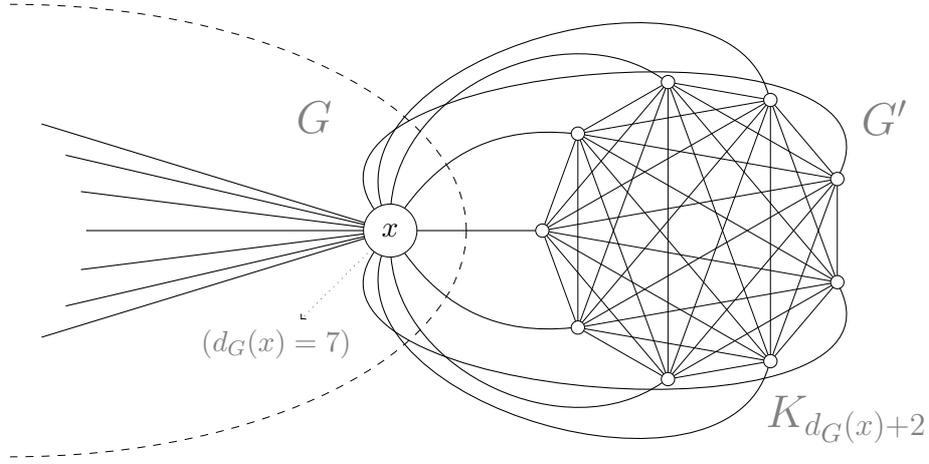
\begin{figure}[H]
    \centering
    \begin{tikzpicture}
    \begin{scope}
        \node[draw,circle,minimum size=20pt,inner sep=0pt,outer sep=0pt]
            (x) at (180:4) {$x$};
        \def\n{9}
        \pgfmathtruncatemacro{\nm}{\n-1}
        \pgfmathtruncatemacro{\nk}{\n-2}
        \pgfmathtruncatemacro{\nn}{\n-3}
        \foreach \i in {0,...,\nm}
        {
        \node[draw,circle,
        minimum size=5pt,inner sep=0pt,outer sep=0pt]
        (x\i) at
        ($ (\i*360/\n+2*90/\n:2) $) {};
        \draw[thin] (x) edge[bend right=(\i-\nm/2)*30] (x\i);
        }

        \foreach \i in {0,...,\nm}
        {
        \foreach \j in {1,...,\nm}
        {
            \pgfmathtruncatemacro{\jn}{mod(\i+\j,\n)}
            \ifthenelse{\jn>\i}
            {\draw[very thin] (x\jn)--(x\i);}{}
        }
        }

        \foreach \i in {0,...,\nn}
        {
        \draw[thin] (x) -- ($ (180:10)+(\i*90/\nn-45:2) $);
        }
        \draw[dashed,very thin] (-3,0) arc(0:90:6cm and 3cm);
        \draw[dashed,very thin] (-3,0) arc(0:-90:6cm and 3cm);
        \node[gray] at (-5,1.5) {\LARGE $G$};
        \node[gray] (v7) at (-5.5,-1.5) {($d_{G}(x)=\nk$)};
        \node[gray] at (2.5,1.5) {\LARGE $G'$};
        \node[gray] at (2,-2.5) {\LARGE $K_{d_{G}(x)+2}$};
        \draw[dotted,->] (x) -- (v7);
    \end{scope}
    \end{tikzpicture}
    \caption{Making a vertex $x\in V(G)$ non-DP-removable by joining 
    it to every vertex of a unique copy of a large enough
    clique.}\label{fig:NPhardPrevent}
\end{figure}
\begin{lemma}\label{lemma:NPhardPrevent}
    Suppose $x$ is a vertex of $G$. Let $K:=K_{d_{G}(x)+2}$ be a clique
    completely disjoint from $V(G)$. Let $G'$ be the disjoint union of $G$
    and $K$ plus ${d_{G}(x)+2}$ edges joining $x$ to every vertex of $K$,
    see \Cref{fig:NPhardPrevent}.  Then any sequence of DP-removals
    starting with $G'$ avoids removing $x$ and every vertex of $K$.
\end{lemma}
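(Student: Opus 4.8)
The plan is to induct on the length of a DP-removal sequence, carrying along a structural invariant that makes $x$ and the clique vertices permanently non-removable. Write $d:=d_G(x)$. In $G'$ we have $d_{G'}(x)=d+(d+2)=2d+2$, the set $\{x\}\cup V(K)$ induces a copy of $K_{d+3}$, every vertex of $V(K)$ has all of its neighbours inside $\{x\}\cup V(K)$, and $x$ has exactly $d$ further neighbours (its old $G$-neighbours). I propose to maintain, for every graph $H$ occurring in the sequence (with its stub bookkeeping $H^s$), the invariant: $x$ and all of $V(K)$ are still present; $\Gamma_H(v)=\{x\}\cup(V(K)\setminus\{v\})$ for each $v\in V(K)$ (equivalently, $\{x\}\cup V(K)$ still induces $K_{d+3}$ and no vertex of $V(K)$ has any other neighbour); and the proper degree of $x$ is $2d+2$, so that $d_H(x)\in\{2d+1,2d+2\}$, the value $2d+1$ occurring exactly when $x$ currently carries the stub-edge. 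Since $G'$ satisfies this, it remains to prove that (a) when the invariant holds, neither $x$ nor any $v\in V(K)$ admits a DP-removal, and (b) removing any other vertex preserves the invariant.

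For (a) the key point is that a clique sitting inside a neighbourhood becomes an independent set in the complement of that neighbourhood, which caps the size of any matching there. If $v\in V(K)$, then by the invariant $\Gamma_H(v)=\{x\}\cup(V(K)\setminus\{v\})$ induces a clique in $H$, so $\overline{H}[\Gamma_H(v)]$ has no edges at all; but if $v$ were DP-removable, Observation~\ref{obs:removable} would force a matching of size $\lfloor d_H(v)/2\rfloor=\lfloor(d+2)/2\rfloor\ge 1$ there, a contradiction. If $x$ were DP-removable, Observation~\ref{obs:removable} would give a matching $M$ in $\overline{H}[\Gamma_H(x)]$ of size $\lfloor d_H(x)/2\rfloor$. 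But $V(K)\subseteq\Gamma_H(x)$ is a clique in $H$, hence independent in $\overline{H}$, so every edge of $M$ meets $S:=\Gamma_H(x)\setminus V(K)$, and since the edges of $M$ are disjoint we get $|M|\le|S|=d_H(x)-(d+2)$, i.e.\ $|M|\le d$ when $d_H(x)=2d+2$ and $|M|\le d-1$ when $d_H(x)=2d+1$. In either case this is strictly smaller than $\lfloor d_H(x)/2\rfloor$ (which equals $d+1$, resp.\ $d$), a contradiction. Hence no DP-removal deletes $x$ or a vertex of $V(K)$.

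For (b) let the DP-removal delete some $w\notin\{x\}\cup V(K)$. Such a step deletes $w$ and its incident edges, turns a matching's worth of non-edges inside $\Gamma_H(w)$ into edges, and possibly re-routes the stub. By the invariant no vertex of $V(K)$ has a neighbour outside $\{x\}\cup V(K)$, so no vertex of $V(K)$ lies in $\Gamma_H(w)$; therefore none of the edges inside $V(K)$, nor any edge between $x$ and $V(K)$, is deleted or created, and no $V(K)$-vertex gains a new neighbour — so the clique $\{x\}\cup V(K)$ and the neighbourhoods $\Gamma_H(v)$, $v\in V(K)$, are unchanged. Moreover a DP-removal is by definition the inverse of a DP-step, and DP-steps preserve the proper degree of every non-stub vertex (Principle~1), so the proper degree of $x$ remains $2d+2$. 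Thus the invariant survives, closing the induction; consequently every sequence of DP-removals from $G'$ leaves $x$ and all of $V(K)$ untouched.

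The part that needs the most care — bookkeeping rather than depth — is the stub-edge: one must check that $x$ cannot be made removable by first acquiring the stub-edge (it cannot, since losing one unit of degree still leaves the clique $V(K)$ of size $d+2$ blocking a matching of the required size $d$ in the complement of $\Gamma_H(x)$), and that a removal elsewhere cannot transfer the stub onto a vertex of $K$ (impossible, since no $V(K)$-vertex ever lies in the neighbourhood of a removed vertex). Everything else is a short vertex-counting estimate powered by the single observation that a complete graph contributes an independent set, and hence essentially no matching, to the complement of any neighbourhood that contains it.
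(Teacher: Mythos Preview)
Your proof is correct and follows essentially the same route as the paper's: induct on the length of the DP-removal sequence, maintain that $\{x\}\cup V(K)$ still induces a clique with no outside neighbours for the $V(K)$-vertices, and use the resulting independent set in the complement neighbourhood to cap the matching size below $\lfloor d_H(\cdot)/2\rfloor$. Your write-up is in fact more careful than the paper's, which leaves the stub-edge case analysis and the reason why $V(K)$-vertices never acquire new neighbours implicit.
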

\begin{proof}
Proof by induction. Recall, that DP removal preserves edges induced between
non-removed vertices. By induction, the neighborhood of a $y\in V(K)$ is
$V(K)-y+x$, which induce a clique of size $d_G(x)$, so any sequence of
DP-removals starting with $G'$ avoids $y$. The vertex $x$ has $2d_G(x)+2$ neighbors: it contains the clique $K$ and $d_G(x)$ other vertices. Clearly, the maximum size matching in the complement of the neighborhood of $x$ is at most $d_G(x)$, which means that a pair of vertices in $V(K)$ are exposed (unmatched), so by     \Cref{obs:removable}, $x$ is not DP removable. The proof is now  complete.
\end{proof}

No matter the parity of $d_G(x)$, in the graph $G'$ constructed by
\Cref{lemma:NPhardPrevent}, the degree of $x$ becomes even. The degree of vertices
in the clique $V(K)$ in $G'$ is $d_G(x)+2$, which is odd if $d_G(x)$ is odd.  If one
wants to avoid adding odd degree vertices to $G$, then instead of joining $x$ to
nodes of a $K_{d(x)+2}$, one can take $K:=K_{\max(d(u),d(v))+2}$ for a
pair $u,v$ of odd degree vertices in $G$, and join both $u$ and $v$ to every vertex
of $K$.

\medskip

We are ready to prove the main result. The upper bound on the maximum degree in the following theorem is not optimized, at the moment we only care that it is a constant.

\begin{theorem}
Given a pair $(m,G)$, where $m$ is a positive integer and $G$ is a simple
graph, it is {\rm\textsc{NP}}-complete to decide whether it is possible to
DP-remove $m$ vertices from $G$. The problem remains {\rm\textsc{NP}}-hard even
when we restrict the input to $\Delta(G)\le 28$ and $m\le n^\varepsilon$
(where $n$ is the number of vertices of $G$ and $\varepsilon$ is a fixed
positive real).
\end{theorem}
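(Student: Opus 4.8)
Membership in \textsc{NP} is immediate: a witness is the list of the $m$ DP-removals, each specified by the removed vertex and the data certifying the applicable inverse operation (a matching, or a near-matching with its exposed vertex and the placement of the stub) in the relevant complemented neighbourhood; each of the $\le n$ steps is verifiable in polynomial time. For hardness I would reduce from a bounded-occurrence version of \textsc{Sat} --- concretely \textsc{4-Sat} in which every variable occurs in at most a constant number of clauses, which is still \textsc{NP}-hard --- so that every vertex we build has degree bounded by an absolute constant; \textsc{4-Sat} (rather than \textsc{3-Sat}) is convenient so that the parity of the number of literals per clause cooperates with the parity conditions below.

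The construction encodes a formula $\phi$ with variables $X_1,\dots,X_p$ and clauses $C_1,\dots,C_q$ into a graph $G_\phi$ carrying one \emph{switch vertex} $v_i$ per variable and one \emph{test vertex} $w_j$ per clause, while every other vertex is made permanently non-DP-removable by hanging a disjoint clique on it via \Cref{lemma:NPhardPrevent} (recall a complete graph admits no inverse operation). The switch vertex $v_i$ is built so that $\overline{\Gamma_{G_\phi}(v_i)}$ has exactly two perfect matchings, so that DP-removing $v_i$ is forced to commit to one of two truth values for $X_i$; the matching used turns a prescribed set of ``designated'' non-edges of $G_\phi$ into edges, placed so that for every clause containing $X_i$ a specific designated non-edge inside that clause's gadget disappears exactly in the branch that \emph{falsifies} the corresponding literal. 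The test vertex $w_j$ is built so that (i) as long as not all of the (four) switch vertices occurring in $C_j$ have been removed, $\overline{\Gamma_{G_\phi}(w_j)}$ has at least one isolated vertex for each surviving such switch, and --- tracking the parity of $d(w_j)$, which is why clauses have an even number of literals --- this is always enough to forbid every inverse operation, so $w_j$ can be removed only after all of $C_j$'s switches; and (ii) once they are removed, the remainder of $\overline{\Gamma_{G_\phi}(w_j)}$ is a fixed small ``core'' from which exactly the designated non-edge of each falsified literal has been deleted, the core being chosen so that it has the required (near-)perfect matching iff at least one of those designated edges still survives --- i.e.\ iff at least one literal of $C_j$ is true. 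Such a core is easy to build; already $K_4$, with the designated set taken to be all edges at one fixed vertex, has the property ``a perfect matching exists iff the whole designated set is not deleted.'' Put $m:=p+q$ and finally pad $G_\phi$ with $O(m^{1/\varepsilon})$ disjoint copies of a fixed complete graph $K_5$: these are irreducible, have bounded degree, and add nothing removable, so the reduction stays polynomial and now forces $m\le n^\varepsilon$.

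For correctness, if $\phi$ is satisfiable one removes all switch vertices (arranging in the gadget design that removing one switch never destroys the two available matchings of another) each with the matching corresponding to the satisfying assignment, and then removes every $w_j$, possible by (ii) --- that is $p+q=m$ removals. Conversely, since the only ever-removable vertices are the $v_i$ and the $w_j$, of which there are exactly $m$, any $m$ removals remove all of them; reading off the matching used at each $v_i$ yields a truth assignment $\sigma$, and because each $w_j$ can be removed only after all switches of $C_j$ (by (i)) and its removal then certifies, via (ii), that some literal of $C_j$ is true under $\sigma$ --- edges added earlier in the process are never undone, so $\sigma$ is consistent --- $\sigma$ satisfies $\phi$. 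The absolute constant $28$ comes from summing the (constant) functional degrees in the gadgets with the blow-up of \Cref{lemma:NPhardPrevent}, where a vertex of functional degree $d$ ends up with degree $2d+2$ and the clique attached to it has degree $d+2$; the bound is not optimised.

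The technical heart, and the main obstacle, is the explicit construction of the switch and test gadgets so that, simultaneously, they have constant degree, the binary choice at each switch propagates correctly to every clause containing the variable, the only interaction between different gadgets is through the designated non-edges (so that the frozen edges read off a consistent assignment), and the parity of each test vertex's current degree is controlled well enough to exclude premature (in particular odd-degree) DP-removals; once the gadgets are fixed, both directions of the equivalence and the degree bookkeeping are routine.
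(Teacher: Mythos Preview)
Your outline follows the same overall strategy as the paper --- reduce from bounded-occurrence \textsc{SAT}, with a two-choice variable gadget, a clause-test gadget, \Cref{lemma:NPhardPrevent} to freeze everything else, and padding by irreducible cliques for the $m\le n^\varepsilon$ restriction --- but you miss one simplification that the paper exploits and that dissolves most of your ``technical heart''.

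The paper arranges that the set of removable vertices $X\cup C$ is \emph{independent} in $G$, so by \Cref{obs:commutative} all their DP-removals commute and one may simply analyse ``remove all variable vertices first, then each clause vertex''. This eliminates any need for your ordering constraint (i); in particular, clause vertices are \emph{not} adjacent to variable vertices. Concretely, the variable gadget $H_i$ joins $x_i$ to every vertex of a $K_8-C_8$: the complement of $\Gamma(x_i)$ is then $C_8$, which has exactly the two perfect matchings encoding true/false, and the eight $C_8$-edges serve as your ``designated non-edges''. For a clause $c_\ell$, the three relevant designated non-edges (one per literal, chosen pairwise vertex-disjoint inside each $H_i$) have their endpoints \emph{identified} across gadgets so that they form a triangle; $c_\ell$ is joined to the three triangle vertices and to a pendant dummy $d_\ell$. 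Thus $d(c_\ell)=4$ is even (so \textsc{3-SAT-3} works --- no need for your 4-literal parity trick), and $c_\ell$ is DP-removable iff at least one side of the triangle is still a non-edge, i.e.\ at least one literal is true.

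So your plan is sound, but the explicit gadgets you defer are in fact short, and the independence of $X\cup C$ replaces the delicate interaction control you anticipate in (i) and in ``removing one switch never destroys the two available matchings of another''.
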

\begin{proof}
The problem is trivially contained in \textsc{NP}, because checking the
DP-removability of one vertex is in P. We prove \textsc{NP}-hardness by a
\textbf{linear} reduction from \textsc{3-SAT-3} (max 3 literals in a clause,
every variable is present in max 3 clauses). We construct a graph $G$ with
$\Delta(G)\le 13$ such that a set of vertices can be completely removed via
DP-removals if and only if $\varphi$ is satisfiable, where $\varphi$ is in conjunctive normal form. By \Cref{lemma:NPhardPrevent}, this is sufficient to prove \textsc{NP}-hardness.

    \medskip

Let $X=\{x_1,\ldots,x_n\}$ be the set of variables of $\varphi$, and make them vertices of $G$. Let $H_i$ be a graph which contains $x_i$ and a copy of a $K_8-C_8$, see \Cref{fig:variable_gadget}.
    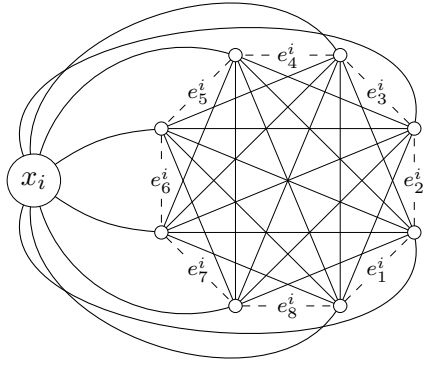
\begin{figure}[H]
        \centering
        \begin{tikzpicture}[scale=.9]
            \begin{scope}[rotate=90]
            \node[draw,circle,minimum size=20pt,inner sep=0pt,outer sep=0pt]
                (x0) at (0*120+90:6) {$x_i$};

            \foreach \k in {0}
            {
            \foreach \i in {0,...,7}
            {
            \node[draw,circle,minimum size=5pt,inner sep=0pt, outer
                sep=0pt]
            (x\k\i) at
            ($ (\k*120+90:2+0.5773502691896258/2) +
            (\i*360/8+13*360/16+\k*120:2) $) {};
            \draw[thin] (x\k) edge[bend right=(\i-3.5)*30] (x\k\i);
            }

            \foreach \i in {0,...,7}
            {
            \foreach \j in {2,...,6}
            {
                \pgfmathtruncatemacro{\jn}{mod(\i+\j,8)}
                \ifthenelse{\jn>\i}
                {\draw[very thin] (x\k\jn)--(x\k\i);}{}
            }
            }
            }

            \foreach \i in {0,...,7}
            {
            \pgfmathtruncatemacro{\jn}{mod(\i+1,8)}
            \pgfmathtruncatemacro{\ji}{mod(\i+2,8)+1}
            \pgfmathtruncatemacro{\jk}{mod(\i+3,8)+1}
            \pgfmathtruncatemacro{\jm}{mod(\i+6,8)+1}
            \draw[dashed] (x0\i)-- node[midway,circle,fill=white,inner
                sep=0pt,outer sep=0pt] {\scriptsize $e^i_\ji$} (x0\jn);
            }
        \end{scope}
        \end{tikzpicture}
        \caption{The variable gadget $H_i$. Dashed lines represent
        non-edges.}\label{fig:variable_gadget}
    \end{figure}
Let the dashed edges from \Cref{fig:variable_gadget} (forming a $C_8$) be $e^i_1,e^i_2,\dots,e^i_8$ in circular order; we will call these the literal edges (these edges are not incident on the vertex $x_i$). Let $C=\{c_1,\ldots,c_t\}$ be the set of clauses of $\varphi$. Let us find a function $f$ which maps
    \begin{align*}
        \{(x_i,c_\ell)\ :\ \neg x_i\in c_\ell\}&\longrightarrow
        \{e^i_{2j-1}\ :\ 1\le i\le n,\ 1\le j\le 4\},\\
        \{(x_i,c_\ell)\ :\ x_i\in c_\ell\}&\longrightarrow
        \{e^i_{2j}\ :\ 1\le i\le n,\ 1\le j\le 4\},
    \end{align*}
such that for any $1\le i\le n$ and any $1\le \ell<r\le t$ the edges $f(x_i,c_\ell)$ and $f(x_i,c_r)$ do not share any endpoints. Such an $f$ trivially exists, because each variable $x_i$ appears in at most 3 clauses of $\varphi$. For example, if $x_i\in c_k$ and $\neg x_i\in c_\ell,c_r$, then $f$ may map $(x_i,c_k)\mapsto     e^i_2$, $(x_i,c_\ell)\mapsto e^i_5$, $(x_i,c_r)\mapsto e^i_7$.

    \medskip

In $H_i$ there are exactly two ways to DP-remove the vertex $x_i$: if the DP-removal adds back the edges $\{e^i_{2j}\ |\ j=1,\ldots,4\}$ that shall represent that the value assigned to $x_i$ is \textbf{false}; if     the DP-removal adds back the edges $\{e^i_{2j-1}\ |\ j=1,\ldots,4\}$ that shall represent that the value assigned to $x_i$ is \textbf{true}.

    \medskip

Let us define $G$ formally first, which will be followed by an informal
description. We will use $\uplus$ to emphasize that the sets participating in
the union are disjoint. Essentially, we will assemble disjoint gadgets and
then identify specific parts of them to obtain the final structure.

Let $D=\{d_1,\ldots,d_t\}$ be a disjoint copy of the set $C$. Let $\sim$ be an equivalence relation that identifies a set of disjoint pairs of vertices of $\uplus_{i=1}^n V(H_i)$: for each $1\le \ell\le t$, arrange the two or three edges in $F(c_\ell):=\{ f(x_i,c_\ell)\ :\ \neg x_i\in c_\ell\text{ or }x_i\in c_\ell\}$ into a  triangle (there are exactly two ways to identify the vertices accordingly, choose arbitrarily) or a cherry (if $F(c_\ell)$ contains two edges). Record the pairs of overlapping     vertices into $\sim$ for every $c_\ell$. 
If $F(c_\ell)=2$, then we add an edge to $G$ between the dangling edges of the cherry $F(c_\ell)/\sim$.
Let
    \begin{align*}
        V(G)&:=\left(\uplus_{i=1}^n V(H_i)/\sim\right)\uplus C\uplus D,\\
        E(G)&:=\left(\uplus_{i=1}^n E(H_i)/\sim\right)\uplus \{c_\ell
        d_\ell\ :\ 1\le \ell\le t\}\uplus
        \{c_\ell v\ :\ v\in (\cup F(c_\ell))/\sim\} \uplus \\
        &\qquad \uplus\{ vw\ :\ v,w\in (\cup F(c_\ell))/\sim\text{ and }vw\notin F(c_\ell)/\sim \}.
    \end{align*}

See \Cref{fig:NPhard}. We continue with an informal description of $G$. Add $C$ to the set of vertices of $G$. If $c_\ell$ is a clause with three literals, we join $c_\ell$ to the endpoints of the at most three edges in $F(c_\ell)$. However, we \textbf{identify the endpoints of these edges} so that they form a triangle or a cherry. This increases the maximum degree of $G$ to $13$. Add a dummy vertex $d_\ell$ and join it to $c_\ell$ by an edge, thus making $d(c_\ell)=4$.

    \begin{figure}[H]
        \centering
        \begin{tikzpicture}[scale=1]
        \begin{scope}
            \node[draw,circle,minimum size=20pt,inner sep=0pt,outer sep=0pt]
                (x0) at (0*120+90:6) {$x_i$};
            \node[draw,circle,minimum size=20pt,inner sep=0pt,outer sep=0pt]
                (x1) at (1*120+90:6) {$x_k$};
            \node[draw,circle,minimum size=20pt,inner sep=0pt,outer sep=0pt]
                (x2) at (2*120+90:6) {$x_r$};

            \foreach \k in {0,1,2}
            {
            \foreach \i in {0,...,7}
            {
            \node[draw,circle,
            minimum size=5pt,inner sep=0pt,outer sep=0pt]
            (x\k\i) at
            ($ (\k*120+90:2+0.5773502691896258/2) + (\i*360/8+13*360/16+\k*120:2) $) {};
            \draw[thin] (x\k) edge[bend right=(\i-3.5)*7] (x\k\i);
            }

            \foreach \i in {0,...,7}
            {
            \foreach \j in {2,...,6}
            {
                \pgfmathtruncatemacro{\jn}{mod(\i+\j,8)}
                \ifthenelse{\jn>\i}
                {\draw[very thin] (x\k\jn)--(x\k\i);}{}
            }
            }
            }

            \node[draw,circle,minimum size=20pt,inner sep=0pt,outer sep=0pt]
                (cl) at (150:5) {$c_\ell$};
            \node[draw,circle,minimum size=20pt,inner sep=0pt,outer sep=0pt]
                (dl) at ($ (cl) + (-2,0) $) {$d_\ell$};

            \draw[very thick] (cl)--(dl);
            \draw[very thick] (cl) edge[bend left=-10] (x20);
            \draw[very thick] (cl) edge[bend left=10] (x00);
            \draw[very thick] (cl) -- (x10);

            \foreach \i in {0,...,7}
            {
            \pgfmathtruncatemacro{\jn}{mod(\i+1,8)}
            \pgfmathtruncatemacro{\ji}{mod(\i+2,8)+1}
            \pgfmathtruncatemacro{\jk}{mod(\i+3,8)+1}
            \pgfmathtruncatemacro{\jm}{mod(\i+6,8)+1}
            \draw[dashed] (x0\i)-- node[midway,circle,fill=white,inner
                sep=0pt,outer sep=0pt] {\scriptsize $e^i_\ji$} (x0\jn);
            \draw[dashed] (x1\i)-- node[midway,circle,fill=white,inner
                sep=0pt,outer sep=0pt] {\scriptsize $e^k_\jk$} (x1\jn);
            \draw[dashed] (x2\i)-- node[midway,circle,fill=white,inner
                sep=0pt,outer sep=0pt] {\scriptsize $e^r_\jm$} (x2\jn);
            }
        \end{scope}
        \end{tikzpicture}
        \caption{The clause gadget (a portion of the graph $G$)
        associated to $c_\ell=x_i\vee \neg x_k\vee
    x_r$.}\label{fig:NPhard}
    \end{figure}
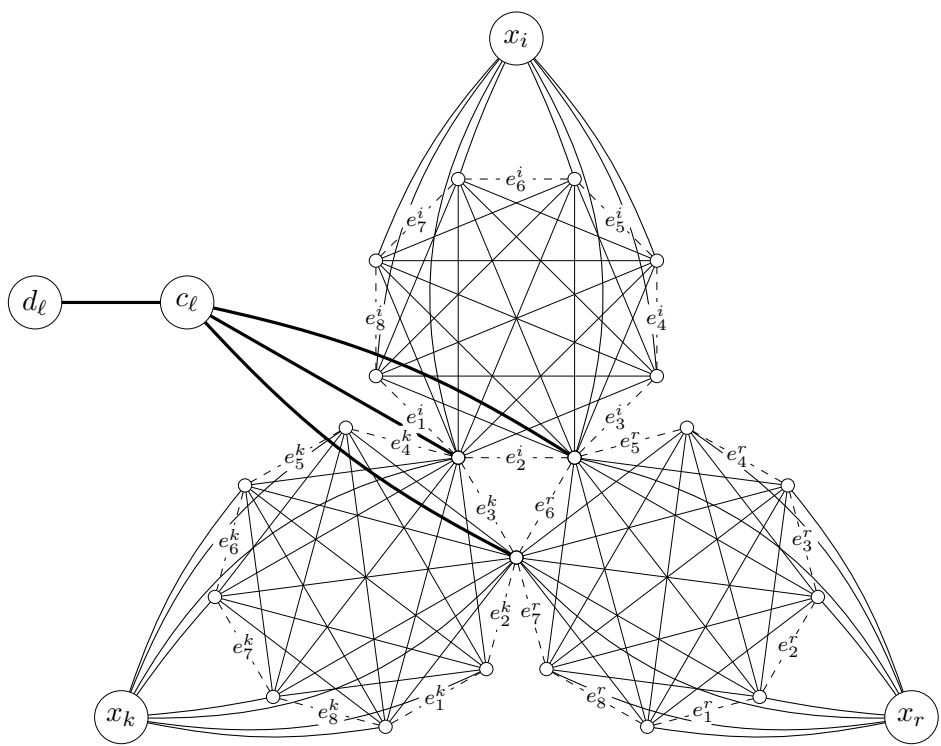

    \medskip

We claim that $X\cup C$ can be entirely DP-removed from $G$ if and only if $\varphi$ is satisfiable. Since $X\cup C$ is an independent set in $G$, the DP removal operations associated to its vertices are commutative (\Cref{obs:commutative}).  Suppose the variables $X$ are already DP-removed and let us study the clause $c_\ell$. The degree of $c_\ell$ in $G'$ is even, therefore only InvOp.~\ref{invop1} may apply to it. Since $d_\ell$ is isolated from the rest of the neighbors of $c_\ell$, there is a suitable matching from the non-edges of the neighborhood of $c_\ell$ if and only if at least one of the edges is missing among the endpoints of the literal edges (to which $c_\ell$ is joined); this happens precisely when at least one of the literals of $c_\ell$ is assigned     \textbf{true}.

    \medskip

To finish the proof, we apply \Cref{lemma:NPhardPrevent} to every vertex $v\in V(G)\setminus (X\cup C)$. In this way we obtain a $G'$ where the maximum degree is at most $13+15=28$. Since the extra cliques are completely disjoint from each other, the DP-removability of the rest of the vertices is not changed (see \Cref{obs:commutative}). By the linear size reduction from \textsc{3-SAT-3}, deciding whether $m=|X|+|C|$     vertices can be DP-removed from $G'$ is \textsc{NP}-complete.

\medskip

Restricting the input to $m\le n^\varepsilon$ is a mere technicality. Extend $G$ by the disjoint union of ${(|X|+|C|)}^\frac{1}{\varepsilon}$ copies of $K_{28}$; the extended graph is still a polynomial of the size of $\varphi$. Clearly, the existence of $|X|+|C|$ DP-removable vertices in $G$ and the extended graph are identical.

\end{proof}

\section{Open questions}

In summary, we presented a detailed analysis of the so-called degree-preserving growth dynamics of networks, which is a special case of degree saturation, a situation frequently occurring in real-world networks. We have shown that the general problem of deciding whether a graph can be built from a sequence of DPG
steps starting from a small kernel graph is NP-complete and finding the smallest such kernel graph is NP-hard. This, however, is in contrast with the numerical evidence from~\cite{DPG21} showing that most real-world networks can easily be constructed from DPG sequences, which raises the question as to what properties these networks have to share in order for this to be true? 
There are several other open questions that the DPG process raises, here are only a few: Can one (and how) characterize the irreducible graphs efficiently, that is, in a non-algorithmic fashion (currently we have to be checking every complement neighborhood for a maximum matching). Is there a degree sequence $d$ with many realizations such that every realization of it is DP-reducible? Given a degree sequence $d$, find a realization $G\in \mathbb{G}(d)$, which is irreducible. Given a multiset $D \in \mathbb{Z}^+$ and a graph $G_0$, under what conditions can one add $|D|$ vertices with degrees $D$, via DPG steps, starting from $G_0$?   Given two simple graphs, is there a sequence of forward and/or backward DP steps that can take one graph into the other? What is the variation distance from the uniform distribution when generating regular graphs via regular-DPG?

\small

\section*{Appendix}
\textit{Irreducibility of \Cref{ex:K4}.}
    Let $V=\{v_1,\ldots,v_{4k}\}$ be a set of vertices ($k\ge 3$, subscripts
    are taken from $\mathbb{Z}/{4k}\mathbb{Z}$). Let the  4-regular graph $G_k$ be as follows: For all $i=1,\ldots,k$, let
    \begin{displaymath}
        G[v_{4i-3},v_{4i-2},v_{4i-1},v_{4i}]\simeq K_4,\quad\hbox{and}\quad
        v_{4i-1}v_{4i+1},v_{4i}v_{4i+2}\in E(G).
    \end{displaymath}
    Since $G$ is vertex-transitive, it is sufficient to show that $v_5$
    cannot be DP-removed.
    \begin{displaymath}
        \Gamma_G(v_5)=\{v_3,v_6,v_7,v_8\}\quad\hbox{and}\quad
        G[\Gamma_G(v_5)]\simeq K_3+\text{isolated vertex.} \qed
    \end{displaymath}

The following two statements fully describe the decomposable networks of max degree at most four.
\begin{lemma}\label{lemma:decomp3reg}
    If $\Delta(G)\le 3$, then $G$ can be DP-reduced into $K_3$'s, $K_4$'s, and maybe a triangle with a dangling stub and at most two other pendant edges. It is possible to achieve this without increasing the number of components of $G$. \qed
\end{lemma}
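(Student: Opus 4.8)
The plan is to prove, by induction on $|V(G)|$, the following strengthened statement: from any simple graph $G$ with $\Delta(G)\le 3$ carrying at most one stub-edge, there is a single admissible DP-removal that keeps the graph simple and subcubic and does not increase the number of components, unless every component of $G$ is a $K_3$, a $K_4$, or --- only if it carries the stub --- a triangle, one of whose vertices bears the stub-edge while each of the other two bears at most one pendant edge. Every DP-removal used below deletes exactly one vertex, so iterating this statement terminates after at most $|V(G)|$ steps at a graph of the required shape. As the Lemma starts from a stubless $G$, and a stub can only be created along the way while Principle~3 permits at most one at a time, the special component is produced at most once, which is exactly the content of the Lemma.

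The moves I would use are elementary. (i) \emph{Smoothing} a degree-$2$ vertex $w$ lying in no triangle (InvOp.~1 with $k=1$, or InvOp.~3a if $w$ itself carries the stub): replace the two edges at $w$ by the edge between its neighbours, leaving every other degree fixed. (ii) Removing a degree-$3$ vertex $w$ whose neighbourhood is not a clique, via InvOp.~2 when there is no stub, and via InvOp.~3b when a stub sits on some $u\notin\Gamma_G(w)\cup\{w\}$ (since the neighbourhood is not a clique and $u$ is non-adjacent to all of it, two disjoint non-edges on $\Gamma_G(w)\cup\{u\}$ are available). (iii) Removing a pendant vertex $w$ with neighbour $n$: via InvOp.~2 when there is no stub --- which \emph{creates} a stub on $n$ --- via InvOp.~1 when $w$ carries the stub (it migrates to $n$), or via InvOp.~3b when a stub sits on some $u\neq n$ with $un\notin E(G)$ (this also clears the stub). (iv) Removing an isolated vertex, with or without a stub (InvOp.~1 or~3a with $k=0$), which can only decrease the component count. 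That $\Delta(G)\le 3$ renders each of these an admissible DP-removal preserving simplicity and subcubicity is a routine case check.

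To keep the component count from rising I would always pick the vertex to remove inside an \emph{end-block} of its component (a leaf of the block--cut tree). If that end-block is a bridge, its non-cut endpoint is a pendant, handled by~(iii). Otherwise it is $2$-connected and --- being subcubic and, unless it is the whole component, not a $K_4$ (a $K_4$ end-block would give its cut vertex degree $\ge 4$) --- it contains a non-cut vertex that is either a degree-$2$ vertex outside a triangle or a degree-$3$ vertex with a non-clique neighbourhood, unless the end-block is a bare triangle, which is then smoothed if its cut vertex has an outside neighbour and otherwise is the special component. In each case the removed vertex is not a cut vertex of $G$ (or is a pendant) and the one or two added edges stay inside its component, so the component count does not increase. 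For the terminal analysis, \Cref{obs:removable} shows a non-removable degree-$3$ vertex has a neighbourhood inducing $K_3$, whence subcubicity forces that vertex and its three neighbours to be a whole $K_4$ component; a non-removable degree-$2$ vertex lies in a triangle. Combining this with the block structure, an irreducible component is a $K_3$, a $K_4$, or --- when the stub is present --- the stated triangle-with-stub gadget, for which one checks directly that no InvOp.~1,~3a or~3b applies: the stub vertex sees two mutually adjacent neighbours, each other triangle vertex is adjacent to the stub vertex so InvOp.~3b is barred, and InvOp.~2 is blocked by the stub.

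The step I expect to be the genuine obstacle is the bookkeeping of the unique stub. Since InvOp.~2 is the \emph{only} removal available for an odd-degree vertex when no stub is present and it produces a stub, a naive ordering could demand two stubs simultaneously (InvOp.~2 needs $d(s)=0$). What rescues the induction is that a stub on a vertex $u$ never obstructs progress in \emph{another} component: a degree-$2$ vertex there can still be smoothed without touching $u$, and a pendant or a non-clique degree-$3$ vertex there can be removed by InvOp.~3b using $u$ --- legitimate precisely because $u$ lies in a different component, hence is adjacent to none of the vertices involved --- which additionally clears the stub and so frees any special triangle that had been set aside. Together with the fact that every move strictly decreases $|V(G)|$, this drives the process to the asserted terminal form; the remaining work --- a careful case analysis on the location of the stub inside the chosen end-block, plus the many small admissibility checks --- is lengthy but routine.
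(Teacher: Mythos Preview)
Your approach via end-blocks of the block--cut tree is a genuinely different route from the paper's proof. The paper argues directly by degree: it characterises when a vertex of degree $1$, $2$, or $3$ fails to be DP-removable (respectively: a first or second neighbour of $s$; inside a triangle; inside a $K_4$), and it handles the component count by a different mechanism---it permits the count to rise by one when a degree-$3$ vertex with edgeless neighbourhood is removed, and then observes that a subsequent DP-removal brings it back down. You aim for the stronger invariant that the count never rises at any step, which is cleaner if it goes through.

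There is, however, a gap in your treatment of the ``bare triangle'' end-block. If an end-block is a triangle $\{v,a,b\}$ with cut vertex $v$, then $a$ and $b$ each have degree $2$ in $G$ and sit inside a triangle, so neither can be smoothed; your rule forbids removing the cut vertex $v$; and no other vertex of this block is available. (Saying the triangle ``is then smoothed'' does not name an admissible operation here.) A concrete obstruction is two triangles joined by a bridge: every end-block is of this forbidden kind. The repair is to remove $v$ after all---its neighbourhood $\{a,b,c\}$ has $ab\in E$ and $ac,bc\notin E$, and the added non-edge $ac$ or $bc$ reconnects the two sides---but this falls outside your blanket justification ``the removed vertex is not a cut vertex of $G$'' and needs its own argument. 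A similar loose end sits in your stub bookkeeping within a single component: if a pendant $w$ has neighbour $n$ and the stub sits on $n$, none of your move~(iii) clauses applies, and your cross-component rescue via InvOp.~3b is unavailable; one must remove $n$ first, which may again be a cut vertex. These cases are all repairable with short ad-hoc arguments, but as written the inductive step is incomplete.
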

\begin{proof}
    If it is not possible to perform any of the inverse operations on a
    vertex $v$, then either $v$ is in a $K_4$ component of $G$, or $v$ is a
    first or second neighbor of the stub vertex $s$.

    \medskip

    A vertex $v$ of degree 2 can always be DP-removed except if $v$ is in a
    $K_3$ subgraph.
    A vertex $v(\neq s)$ of degree 1 can always be removed except if $v$ is
    the first or second neighbor of $s$.

    \medskip

    If the DP-removal of any vertex $v$ of degree 3 increases the number of
    components, then $G[\Gamma_G(v)]$ has zero edges, moreover, $G-v$ has exactly two
    more components than $G$. It is easy to see that there is a next inverse
    DP-step which decreases the number of components by one.

    \medskip

    If $vs$ is the stub edge and $v$ cannot be removed via InvOp.~\ref{invop3a},
    then $d(v)=3$ and the second neighbors of $s$ are joined by an edge.  If
    the graph is not decomposable, then every $x\in \Gamma_G(\Gamma_G(\Gamma_G(s)))$ must have
    degree 1.
\end{proof}

\begin{lemma}\label{lemma:4regindecomp}
    4-regular indecomposable graphs have the following structure: Take any number of vertex-disjoint copies of $K_5$, $K_5-e$, $K_4$. Then, to make the degree of every vertex equal to 4, match every vertex of degree 3 to
    another vertex of degree 3 in a different component, and add the edges corresponding to this matching to the graph. \qed
\end{lemma}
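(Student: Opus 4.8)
The plan is to prove the substantive (forward) direction — every $4$-regular irreducible (``indecomposable'') graph decomposes as stated — and to note the easy converse. Since every vertex of $G$ has even degree $4$, no stub-node is ever needed, so $G$ is irreducible iff the inverse operation \textit{InvOp.}~\ref{invop1} fails at every vertex, i.e.\ (by \Cref{obs:removable}, whose converse also holds when the degree is even and there is no stub) iff for every $v$ the complement $\overline{G}[\Gamma_G(v)]$ has no matching of size $2$. A graph on four vertices has no perfect matching exactly when its edge set is a star or a triangle; passing to complements, $G[\Gamma_G(v)]$ must be one of $K_4$, $K_4-e$, the ``paw'' (a triangle with one pendant edge), $K_3\cup K_1$, or $K_{1,3}$. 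Write $t(v):=e(G[\Gamma_G(v)])\in\{3,4,5,6\}$.

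Next I eliminate the paw and $K_{1,3}$ by a uniform trick. If $G[\Gamma_G(v)]=K_{1,3}$ with centre $a$ and leaves $b,c,d$, look at $b$: its two further neighbours $b_1,b_2$ lie outside $\{v\}\cup\Gamma_G(v)$, so inside $G[\Gamma_G(b)]=G[\{v,a,b_1,b_2\}]$ the only forced edge is $va$ and at most $b_1b_2$ occurs; thus $vb_1$ and $ab_2$ are disjoint non-edges and $b$ is DP-removable, a contradiction. The paw is killed identically through its pendant vertex $d$ (its two further neighbours avoid $\{v,a,b,c,d\}$, so $vd_1$ and $cd_2$ are disjoint non-edges of $G[\Gamma_G(d)]$). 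Hence for every $v$ we have $G[\Gamma_G(v)]\in\{K_4,K_4-e,K_3\cup K_1\}$, i.e.\ $t(v)\in\{3,5,6\}$.

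Now I attach to each vertex a \emph{base component}. If $t(v)=6$ then $\{v\}\cup\Gamma_G(v)$ induces $K_5$ and, by $4$-regularity, is a connected component. If $t(v)=5$, with $\Gamma_G(v)=\{a,b,c,d\}$ and $ab$ the unique non-edge, then $c,d$ have all four neighbours in $\{v,a,b,c,d\}$ while each of $a,b$ has exactly one neighbour outside, so these five vertices induce $K_5-ab$; call this a $K_5-e$ base component with \emph{deficient} vertices $a,b$. If $t(v)=3$, write $G[\Gamma_G(v)]=T_v\cup\{x_v\}$ where $T_v=\{y_1,y_2,y_3\}$ is a triangle and $x_v$ is isolated in $\Gamma_G(v)$; let $z_i$ be the unique further neighbour of $y_i$. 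One checks $z_i\notin\{v\}\cup\Gamma_G(v)$ (it cannot be $x_v$, since $x_v\not\sim y_i$) and $z_i\not\sim v$, so $G[\Gamma_G(y_i)]$ is the triangle $(T_v\setminus\{y_i\})\cup\{v\}$ together with $z_i$; by the classification $z_i$ is adjacent to both or to neither of the two vertices of $T_v$ other than $y_i$ (adjacency to exactly one would make $G[\Gamma_G(y_i)]$ a paw), so $t(y_i)\in\{3,5\}$. If some $t(y_i)=5$ then $z_1=z_2=z_3=:p$, whence $\{v,y_1,y_2,y_3,p\}$ induces $K_5-vp$, all three $y_i$ are of type $5$, and $v,p$ are the deficient vertices of this $K_5-e$; if $t(y_1)=3$ then $z_1$ is adjacent to no vertex of $T_v\setminus\{y_1\}$, which forces $t(y_2)=t(y_3)=3$ as well (otherwise $z_2$ or $z_3$ would coincide with $z_1$ yet be adjacent to a vertex of $T_v$ to which $z_1$ is not), and then $\{v,y_1,y_2,y_3\}$ induces $K_4$ with all four vertices deficient. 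Thus a $t(v)=3$ vertex is either deficient in a $K_5-e$ base component or a corner of a $K_4$ base component.

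Finally, these base components partition $V(G)$: the consistency checks above show that vertices I have placed together genuinely determine the same block ($K_5$ on five $t=6$ vertices; $K_5-e$ on three mutually adjacent $t=5$ vertices plus their two deficient partners; $K_4$ on four mutually adjacent $t=3$ vertices), and no vertex is placed in two blocks. The edges of $G$ outside the base components are exactly the ``external'' edges at deficient vertices; each deficient vertex $v$ has exactly one, say $vx_v$, and $x_v$ is again deficient with $vx_v$ as its only external edge, while $v$ and $x_v$ lie in distinct base components (if they shared one, $vx_v$ would lie inside it). So the external edges form a perfect matching on the deficient vertices joining distinct base components — precisely the claimed structure; conversely any graph built by that recipe is irreducible, since each of its vertices then has neighbourhood graph $K_4$, $K_4-e$, or $K_3\cup K_1$. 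I expect the $t(v)=3$ analysis to be the main obstacle: forcing $T_v$ to carry a single value of $t$ and then checking that the $K_4$- and $(K_5-e)$-blocks read off from different vertices fit together rather than overlap is where the case bookkeeping is heaviest, and the paw-exclusion step is the other load-bearing ingredient, since without it spurious indecomposable $4$-regular graphs would survive.
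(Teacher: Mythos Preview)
Your proof is correct and follows essentially the same strategy as the paper's sketch: classify the possible induced neighbourhoods $G[\Gamma_G(v)]$, eliminate the bad cases by exhibiting a DP-removable vertex nearby, and then group vertices into $K_5$, $K_5-e$, or $K_4$ blocks. The paper packages the block-formation step as an equivalence relation ($u\sim v$ iff $u$ lies in a triangle of $G[\Gamma_G(v)]$) and argues transitivity by a short case analysis, whereas you construct the blocks directly from the value of $t(v)$ and verify consistency; these are equivalent bookkeeping devices. Your treatment of the paw and $K_{1,3}$ exclusions is in fact more careful than the paper's sketch, whose sentence ``if $G[\Gamma_G(v)]$ does not contain a triangle, then $v$ can be DP-removed'' is literally false for the $K_{1,3}$ neighbourhood (the complement $K_3\cup K_1$ has no perfect matching) --- what is actually true, and what you prove, is that a \emph{leaf} of that $K_{1,3}$ can be DP-removed.
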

\begin{proof}[Sketch of the proof.]
    It is easy to confirm that if a graph is constructed as described in the
    claim of Lemma \ref{lemma:4regindecomp}, then it is irreducible, since there is an induced $K_3$
    in the neighborhood of every vertex.

    \medskip

    Let $G$ be a 4-regular indecomposable graph. For any $v\in V(G)$, one of the
    following must hold: \textbf{(A)} $G[\Gamma_G(v)]\simeq K_4$; \textbf{(B)}
    $G[\Gamma_G(v)]\simeq K_{4}-e$; \textbf{(C)}  $G[\Gamma_G(v)]\simeq K_3+\text{an
    isolated vertex.}$

    If $G[\Gamma_G(v)]$ does not contain a triangle, then $v$ can be DP-removed.
    If there are non-edges induced in $G[\Gamma_G(v)]$, then those must intersect
    at some vertex $w$, otherwise $v$ can be DP-removed.  However, if $w$
    only has one neighbor in $\Gamma_G(v)$, say $u$, then $\Gamma_G(w)=\{u,v,s,t\}$,
    where $s,t\notin \Gamma_G(v)$. Clearly, $w$ can be DP-removed, since all 4
    neighbors of $u$ and $v$ are already accounted for, and $s,t$ are not
    amongst them.

    \medskip

    Let us say that two arbitrary vertices $u$ and $v$ are in $\sim$
    relationship if $u$ is a vertex of a $K_3$ in $G[\Gamma_G(v)]$. It is easy to see
    that $\sim$ is symmetric. Also, if $u\sim v\sim w$, then $u,w\in \Gamma_G(v)$ and
    a short case analysis shows that $u\sim w$ in this case. Therefore, $\sim$
    is an equivalence relation. Each equivalence class induces exactly one of
    the three components listed in the statement of the lemma, because we
    already know what the neighborhoods look like. Since the edges joining
    vertices from two distinct equivalence classes must join degree 3 vertices,
    these form a matching.
\end{proof}


\begin{thebibliography}{99}

\bibitem{ACL00} Aiello, W. - Chung, F. - Lu, L. A random graph model for massive
graphs. {\em Proceedings of the thirty-second annual ACM symposium on Theory of
computing}, (2000), 171-180. \DOI{10.1145/335305.335326}

\bibitem{AGW19} Arman, A. - Gao, P. - Wormald, N. Fast Uniform Generation of
Random Graphs with Given Degree Sequences. {\em IEEE 60th Annual Symposium
on Foundations of Computer Science (FOCS)}, (2019), 1371-1379.
\DOI{10.1109/FOCS.2019.00084}

\bibitem{BA99} Barabasi A.L. - Albert R. Emergence of scaling in random networks. {\em Science} {\bf 286}(5439), (1999), 509--512. \DOI{0.1126/science.286.5439.509}

\bibitem{Bollobas} Bollob\'{a}s B.: A probabilistic proof of an asymptotic formula for the number of labelled regular graphs, {\sl Eur. J. Combin.}, {\bf 1} (4), (1980), 311--316. \DOI{10.1016/S0195-6698(80)80030-8}

\bibitem{BC76} Bondy, J. A. - Chvátal, V.: A method in graph theory. {\sl Discrete Mathematics} {\bf 15} (2), (1976), 111-135. \DOI{10.1016/0012-365X(76)90078-9}

\bibitem{Bou03} Bourassa V. - Holt F.: SWAN: Small-world wide area networks. In {\sl Proc. International Conference on Advances in Infrastructure}, L'Aquila, Italy (SSGRR 2003w), paper \# 64 (2003)


\bibitem{CL02} Chung F. - Lu L.  The average distances in random graphs with
given expected degrees. {\em Proc. Natl. Acad. Sci. USA} {\bf 99}(25) (2002), 15879--15882. \DOI{10.1073/pnas.252631999}

\bibitem{CDG07} Cooper, C.- Dyer, M. - Greenhill, C.: Sampling Regular Graphs and a Peer-to-Peer Network, {\sl Combinatorics, Probability and Computing } {\bf 16} (4) (2007), 557--593. \DOI{10.1017/S0963548306007978}

\bibitem{edmonds} Edmonds J.: Paths, trees, and flowers, {\sl Canad. J. Math.} {\bf 17} (1965), 449--467. \DOI{10.4153/CJM-1965-045-4}

\bibitem{P-stable} Erd\H{o}s P.L. - Greenhill C. - Mezei T.R. - Mikl\'os I.  - Solt\'esz D. - Soukup L.: The mixing time of switch Markov chains: a unified approach, to appear in {\sl Europen J. Combinatorics} (2021), pp. 57.

\bibitem{GG21} Gao, P. - Greenhill, C.: Mixing time of the switch Markov chain and stable degree sequences, {\sl Discrete Applied Mathematics} {\bf 291} (2021), 143--162. \DOI{10.1016/j.dam.2020.12.004}

\bibitem{GW16} Gao, P. - Wormald N.: Enumeration of graphs with a heavy-tailed degree sequence, {\sl Advances in Mathematics} {\bf 287} (2016), 412--450. \DOI{10.1016/j.aim.2015.09.002}

\bibitem{Holt05} Holt F.B. - Bourassa V. - Bosnjakovic A.M. - Popovic J.:  SWAN: Highly reliable and efficient networks of true peers. In {\sl Handbook on Theoretical and Algorithmic Aspects of Sensor, Ad Hoc Wireless, and Peer-to-Peer Networks} (J. Wu, ed.), CRC Press, Boca Raton, Florida,  (2005), 799--824.

\bibitem{HMM94} Hu J. - Macdonald A. H. - McKay, B.D.: Correlations in two-dimensional vortex liquids. {\em Phys. Rev. B} {\bf 49} (1994), 15263--15270. \DOI{10.1103/physrevb.49.15263 }

\bibitem{JS90} Jerrum, M. - Sinclair, A.: Fast uniform generation of regular graphs, {\sl Theoretical Computer Science\/} {\bf 73} (1) (1990), 91--100. \DOI{10.1016/0304-3975(90)90164-D}

\bibitem{JSV04} Jerrum, M. - Sinclair, A. - Vigoda, E.: A polynomial-time approximation algorithm for the permanent of a matrix with nonnegative entries, {\sl Journal of the ACM\/} {\bf 51} (4) (2004), 671--697. \DOI{10.1145/1008731.1008738}

\bibitem{DPG21} Kharel S.  - Mezei T.R. - Chung S. - Erd\H{o}s P.L.  - Toroczkai Z.: Degree-preserving network growth, in press, {\sl Nature Physics} (2021), pp. \DOI{10.1038/s41567-021-01417-7}

\bibitem{Molloy} Molloy M. - Reed B.: A critical point for random graphs with a
given degree sequence, {\sl Random Structures \& Algorithms}, {\bf 6} (2-3), (1995),
161--179. \DOI{10.1002/rsa.3240060204}

\bibitem{posa} P\'osa, L.: A theorem concerning Hamilton lines. {\sl Magyar Tud. Akad. Mat. Kutató Int. Közl.} {\bf 7} (1962), 225--226.

\bibitem{SVW18} Štefankovič, D. - Vigoda, E. - Wilmes, J.: On Counting Perfect Matchings in General Graphs, {\sl LATIN 2018: Theoretical Informatics}, {\bf LNCS 10807} (2018), 873--885. \DOI{10.1007/978-3-319-77404-6\_63}

\bibitem{vizing} Vizing V.G.:  On an estimate of the chromatic class of a $p$-graph. {\sl Diskret. Analiz.} {\bf 3}(1964),  25--30.

\bibitem{WS98} Watts, D. J. - Strogatz, S. H. Collective dynamics of
`small-world' networks. {\sl Nature\/} {\bf 393} (6684) (1998), 440--442.
\DOI{10.1038/30918}.

\end{thebibliography}
\end{document}